\newcolumntype{L}{>{\displaystyle}l}
\newcolumntype{C}{>{\displaystyle}c}
\newcolumntype{R}{>{\displaystyle}r}
\newcolumntype{L}{>{\displaystyle}l}
\newcolumntype{C}{>{\displaystyle}c}
\newcolumntype{R}{>{\displaystyle}r}
\newtheorem {theorem} {Theorem} 
\newtheorem {proposition} [theorem] {Proposition}
\newtheorem {lemma} [theorem] {Lemma}
\newtheorem {example} {Example}
\newtheorem {remark}[theorem]{Remark}
\begin{document}

\title[continuous piecewise linear differential systems]
{LIMIT CYCLES BIFURCATING FROM A PERIOD ANNULUS\\ IN  CONTINUOUS PIECEWISE LINEAR DIFFERENTIAL \\ SYSTEMS WITH THREE ZONES}

\let\thefootnote\relax\footnotetext{Corresponding author Claudio Pessoa: Departamento de Matem\'{a}tica, IBILCE--UNESP, 15054--000 S. J. Rio Preto,
S\~ao Paulo, Brazil. email:
pessoa@ibilce.unesp.br}

\author[M. F. S. Lima, C. Pessoa and W. F. Pereira]
{Maur\'{\i}cio Firmino Silva Lima$^{1}$, Claudio Pessoa$^{2}$ and Weber F. Pereira$^{2}$}

\address{$^1$ Centro de Matem\'atica Computa\c c\~ao e Cogni\c c\~ao, Universidade Federal do ABC\\
Santo Andr\'e, S\~ao Paulo, 09210-170, Brazil} \email{mauricio.lima@ufabc.edu.br}

\address{$^2$  Departamento de Matem\'{a}tica, IBILCE--UNESP \\ J. Rio Preto,
S\~ao Paulo, 15054--000, Brazil} \email{pessoa@ibilce.unesp.br}\email{weberf@ibilce.unesp.br}

\subjclass[2010]{34A36, 34C29, 37G15}

\keywords{piecewise linear vector fields, Poincar\'e map, limit cycles, center, focus}

\maketitle

\begin{abstract}
We study a class of planar
continuous piecewise linear vector fields with three zones. Using
the Poincar\'{e} map and some techniques for proving the existence of limit cycles for smooth differential systems, we prove that this class admits at least two limit cycles that appear by perturbations of a period annulus. Moreover, we describe the bifurcation of the limit cycles for this class through two examples of two-parameter families of  piecewise linear vector fields with three zones.  
\end{abstract}

\section{Introduction}

One of the most important and studied problem in the qualitative theory of differential systems, in particular for planar systems, is the maximum number, stability and position of limit cycles, see for instance \cite{Yan} and \cite{Zou}.

The famous second part of the 16th Hilbert problem proposed in the $1900^\prime$s and that have been studied for a long time addresses the problem of the maximum number and position of limit cycles restricted to the planar polynomial differential equations.

In the piecewise continuous context this problem has been studied by many authors, see for instance \cite{Freire}, \cite{Hogan}, \cite{Llibre3} and \cite{van}. This class of system is very
important, mainly by their numerous applications, where they arise
in a natural way, for instance in control theory \cite{Lefschetz}
and \cite{Narendra}, design of electric circuits \cite{Chua},
neurobiology \cite{FitzHugh} and \cite{Nagumo}, etc.

In this context, specially for piecewise linear differential systems, many works have been developed. Most of them obtaining results on the existence and uniqueness of limit cycles for systems with  only one curve of discontinuity. For systems with more then one curve of discontinuity not many works are available and, more important than this, recently (see \cite{Llibre4}) an example with more then one limit cycle could be obtained for a special class of Liénard piecewise linear differential system with two curves of discontinuity.

As far as we know the paper \cite{Llibre4} is, until this moment, the only place where at least two limit cycles was obtained in this context. 

It is important to observe that the characterization of all possible cases with two (or maybe more than two) limit cycles is far from being completely solved.

In this paper we give a contribution in this direction where we provide a family of piecewise linear differential systems with at least two limit cycles. We observe that the bifurcation that give rise to the second limit cycle is very close to the one that appear in \cite{Llibre4}, namely, one limit cycle visiting the three zones and the second limit cycle visiting two zones and that bifurcates of a period annulus.

We observe that in \cite{Llibre4} this period annulus is obtained of a center and the second limit cycle appears when this center becomes a focus stable/unstable under parameter changes while, in the present paper, this region is obtained from two foci of different stability with eigenvalues of the same modulus and the second limit cycle appears when these foci have attracting/repelling with different magnitude. However, in both cases, the piecewise linear differential systems have a center. In \cite{Llibre4} this center is located in the central region while in the present work the center is supposed to be in the left region.

In order to set the problem, consider the plane $\mathbb R^2$ divided in three closed regions $R_-,$ $R_o$ and $R_+$ which frontier are given by two parallel straight lines $L_-$ and $L_+$  symmetric with respect to
the origin such that $(0,0)\in R_o$ and the regions $R_-$ and
$R_+$ have as boundary the respective straight lines $L_-$ and
$L_+$. In this paper, we study the existence of limit cycles for the family of differential systems

\begin{equation}
{\bf x'}=\left\{\begin{array}{ll}
                       A_-{\bf x}+B_- & {\bf x}\in R_-, \\
                       A_o{\bf x}+B_o & {\bf x}\in R_o, \\
                       A_+{\bf x}+B_+ & {\bf x}\in R_+,
                     \end{array}
\right.\label{system 1}
\end{equation}
that are continuous piecewise linear differential systems with
tree zones where ${\bf x}=(x,y)\in\mathbb{R}^2$, $A_i\in\mathcal{M}_2(\mathbb{R}),$
$B_i\in\mathbb{R}^2,\;i\in\{-,o,+\}$, and ${\bf x'}=\dfrac{d{\bf
x}}{dt}$ with $t$ the time.


Let $X_i=A_i{\bf x}+B_i$, $i\in \{-, o, +\}$, the linear vector
fields given in \eqref{system 1}. Denote by $X$ the
continuous piecewise linear vector field associated to system
\eqref{system 1}, i.e. ${\bf x'}=X({\bf x})$, where ${\bf
x}=(x,y)\in\mathbb{R}^2$. Therefore $X\mid_{R_i}=X_i\mid_{R_i}$,
$i\in \{-, o, +\}$, i.e., $X$ restrict to each one of these zones
are linear systems with constant coefficients that are glued
continuously at the common boundary.

Our goal is improve the results of the papers \cite{Mauricio} and \cite{ClauWeMau}
considering the case not covered in these papers and where at least two limit cycles can be present. Thus, as in \cite{Mauricio} and \cite{ClauWeMau}, we
assume that $X_o$ has an equilibrium of focus type and the
equilibria of $X_-$ and $X_+$ are a center or a focus. From the
next result we can refine these assumptions a bit more, but before
we need some notation.

We say that the vector field $X_i$ has a \textit{real equilibrium}
${\bf x}^*$ in $R_i$ with $i\in\{-,o,+\}$ if ${\bf x}^*$ is an equilibrium of
$X_i$ and ${\bf x}^*\in R_i.$ Otherwise, we will say that $X_i$ has a
\textit{virtual equilibrium} ${\bf x}^*$ in $R_i^c$ if ${\bf x}^*$ is an
equilibrium of $X_i$ and ${\bf x}^*\in R_i^c$, where $R_i^c$ denotes the
complementary of $R_i$ in $\mathbb{R}^2$.

From now on for $i\in\{-,o,+\}$ we denote by $d_i$  the determinant of the
matrix $A_i$ and by $t_i$ its trace. Also we denote by ${\mbox Int}(\Gamma)$ the open region limited by a closed Jordan curve $\Gamma.$

The next result is an immediate consequence of the Green's Formula (see also Proposition 3 of \cite{Llibre2}).

\begin{proposition}
If system {\rm (\ref{system 1})} has a simple invariant
closed curve $\Gamma$ then
$$\int\!\!\!\int_{\mbox{
Int}_-(\Gamma)}t_-dxdy+\int\!\!\!\int_{\mbox{
Int}_o(\Gamma)}t_odxdy+ \int\!\!\!\int_{\mbox{
Int}_+(\Gamma)}t_+dxdy =t_-S_-+t_oS_o+t_+S_+=0,$$ 
where $\mbox{
Int}_i(\Gamma)=\mbox{ Int}(\Gamma)\cap R_i$ and $S_i=\mbox{
area}(\mbox{Int}_i(\Gamma))$ with $i\in\{-,o,+\}.$\label{prop
green}
\end{proposition}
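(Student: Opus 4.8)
The plan is to apply Green's theorem (the divergence form) to the planar vector field $X$ on the region $\mathrm{Int}(\Gamma)$, treating the three zones separately because $X$ is only piecewise linear. First I would note that since $\Gamma$ is a simple closed invariant curve, the vector field $X$ is tangent to $\Gamma$ along $\Gamma$, so the outward normal component $X\cdot n$ vanishes identically on $\Gamma$; hence the total flux of $X$ across $\partial(\mathrm{Int}(\Gamma))$ is zero. By the divergence theorem this gives $\iint_{\mathrm{Int}(\Gamma)}\mathrm{div}\,X\,dx\,dy=0$, provided $\mathrm{div}\,X$ is integrable, which it is since $X$ is piecewise affine and hence $\mathrm{div}\,X$ is piecewise constant (bounded, measurable).

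Next I would compute $\mathrm{div}\,X$ on each zone. On $R_i$ we have $X=X_i=A_i\mathbf{x}+B_i$, and the divergence of an affine field $A_i\mathbf{x}+B_i$ is exactly the trace $t_i$ of $A_i$; the constant term $B_i$ contributes nothing. Therefore $\mathrm{div}\,X$ equals the piecewise-constant function that takes the value $t_i$ on the interior of $R_i$. Splitting the integral over $\mathrm{Int}(\Gamma)$ into its three pieces $\mathrm{Int}_i(\Gamma)=\mathrm{Int}(\Gamma)\cap R_i$ yields
\begin{equation*}
0=\iint_{\mathrm{Int}(\Gamma)}\mathrm{div}\,X\,dx\,dy=\sum_{i\in\{-,o,+\}}\iint_{\mathrm{Int}_i(\Gamma)}t_i\,dx\,dy=t_-S_-+t_oS_o+t_+S_+,
\end{equation*}
where $S_i=\mathrm{area}(\mathrm{Int}_i(\Gamma))$, which is the claimed identity.

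The only point requiring a little care — and the main (mild) obstacle — is justifying the application of the divergence theorem across the lines of discontinuity $L_-$ and $L_+$. Here I would invoke the continuity of $X$: because the three linear pieces are glued continuously along $L_\pm$, there is no jump in $X$ across these lines, so no spurious boundary contribution arises from the interfaces when one adds up the three applications of Green's formula on the subregions $\mathrm{Int}_i(\Gamma)$ — the interface integrals cancel in pairs. (Equivalently, $X$ is Lipschitz, so $\mathrm{Int}(\Gamma)$ together with its boundary $\Gamma$ can be handled by a single application of Green's theorem with an $L^\infty$ divergence.) The hypothesis that $\Gamma$ is \emph{simple} (i.e., that the arcs of $\Gamma$ in each zone are regular and $\mathrm{Int}(\Gamma)$ is a genuine Jordan domain decomposed nicely by $L_\pm$) is exactly what guarantees that each $\mathrm{Int}_i(\Gamma)$ is a finite union of piecewise-smooth domains to which Green's formula applies. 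This is the same argument as in Proposition 3 of \cite{Llibre2}, so I would keep the write-up brief.
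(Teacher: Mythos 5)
Your argument is correct and is exactly the one the paper intends: the paper dispenses with the proof by calling the statement ``an immediate consequence of Green's Formula,'' and your write-up simply fills in that standard computation (tangency of $X$ along the invariant curve $\Gamma$ gives zero flux, the divergence of $A_i\mathbf{x}+B_i$ is $t_i$, and continuity of $X$ across $L_\pm$ makes the interface contributions cancel). No discrepancy with the paper's approach.
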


From this result if $X_o$ has a focus and either $X_-$ or $X_+$ has a center then a necessary condition for the existence of such $\Gamma$ is that $\Gamma$ visit at least the two zones having a real or virtual focus with different stability. Then, from now on, we assume the following hypothesis:

\begin{table}[h]
\begin{tabular}{ll}
(H1) &  $X_o$ has a focus. \\ \\
(H2) & The others equilibria of $X_-$ and
$X_+$ are a center and a focus with  \\ & different stability with
respect to the focus of $X_o$.
 \end{tabular}
 \end{table}

As usual a {\it limit cycle} $\mathcal{C}$ of (\ref{system 1}) is an isolated periodic
orbit of (\ref{system 1}) in the set of all periodic
orbits of (\ref{system 1}). We say that $\mathcal{C}$ is {\it hyperbolic} if
the integral of the divergence of the system along it is different
from zero, for more details see for instance \cite{dumortier}.

The main result of this paper is the following.

\begin{theorem}
\label{the_paper2:01} Assume that system {\rm (\ref{system 1})} satisfies
assumptions {\rm (H1)}, {\rm (H2)} and $X_o$ has a real focus at
the boundary of $R_o$. If the focus of $X_o$ belongs to $L_+$ (respectively $L_-$) and $X_+$ (respectively $X_-$) also
has a focus at the same point of $L_+$ (respectively $L_-$) and both the foci give rise to a center for system
\eqref{system 1}, then at least two limit cycles can appear by small perturbations of the parameters of system \eqref{system 1}.
\end{theorem}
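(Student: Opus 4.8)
The plan is to reduce the problem to a perturbation of an explicit one-parameter situation in which system \eqref{system 1} has a period annulus, and then to produce two limit cycles out of it by a Melnikov-type / displacement-function argument. First I would use the continuity of $X$ together with hypotheses (H1)--(H2) and the extra assumption that the focus of $X_o$ sits on $L_+$ and coincides with a focus of $X_+$ to normalize the system by an affine change of coordinates: place the common boundary point at a convenient location on $L_+$, scale so that the left vector field $X_-$ has a center and the two foci $X_o$, $X_+$ have purely the same modulus of the characteristic exponents but opposite signs. Under these normalizations one checks, using Proposition~\ref{prop green} as the guiding obstruction, that there is a curve in parameter space along which \eqref{system 1} possesses a genuine center at that boundary point, surrounded by a period annulus $\mathcal{P}$ whose inner periodic orbits visit only $R_-$ and $R_o$ (the "two-zone" orbits) and whose outer periodic orbits additionally enter $R_+$ (the "three-zone" orbits). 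This annulus is the object we perturb.

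Next I would set up the Poincaré (return) map on a transversal section $\Sigma$ cutting the period annulus, parametrized by $h$ ranging over $\mathcal{P}$; write the displacement function $d(h,\varepsilon)=P(h,\varepsilon)-h$, where $\varepsilon$ collects the perturbation parameters (the traces $t_-,t_o,t_+$ and the positions of the equilibria), chosen so that $d(h,0)\equiv 0$ on the annulus. The first-order term $M(h)=\partial_\varepsilon d(h,0)$ is computed as a sum of three line integrals of the divergences $t_i$ over the arcs of the unperturbed periodic orbit lying in $R_-$, $R_o$, $R_+$ respectively — exactly the piecewise version of the classical abelian-integral/Melnikov formula, and consistent with Proposition~\ref{prop green}. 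Because the annulus has two qualitatively different regimes (two-zone orbits for $h$ small, three-zone orbits for $h$ large), $M(h)$ is given by two different analytic expressions joined $C^1$ (or $C^0$) at the separating value $h=h^*$ where the orbit becomes tangent to $L_+$. The crux is to show that, for a suitable choice of the perturbation direction, $M(h)$ has at least two simple zeros in the interior of $\mathcal{P}$: by the standard argument each simple zero of $M$ persists as a zero of $d(\cdot,\varepsilon)$ for small $\varepsilon\neq 0$, hence gives a hyperbolic limit cycle (the integral of the divergence along it is $M'(h)\neq 0$ up to higher order).

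To get the two zeros I would argue as follows. On the inner (two-zone) part, $M(h)=\alpha\, I_-(h)+\beta\, I_o(h)$ for explicit positive monotone functions $I_-,I_o$ coming from the center region and the focal region; tuning the ratio $\alpha/\beta$ produces one sign change there. On the outer (three-zone) part an extra term $\gamma\, I_+(h)$ appears, with $I_+$ vanishing at $h=h^*$ and growing afterwards; choosing $\gamma$ of appropriate sign relative to $\alpha,\beta$ forces a second sign change for $h>h^*$. One then verifies that at the two crossing values $M'\neq 0$, either by direct monotonicity estimates on $I_-,I_o,I_+$ (these are integrals of elementary functions over arcs of exponential/elliptic-type spirals, so their monotonicity and limiting behavior as $h\to 0^+$, $h\to h^*$, and $h\to$ (outer boundary) can be pinned down) or by a Rolle/derivation-division argument bounding the number of zeros of $M$ from above by two on each piece. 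Finally, a compactness/continuity argument ensures the two limit cycles remain distinct and in the interior of the annulus for all sufficiently small perturbations, and tracking the construction back through the normalizing change of coordinates yields the statement for the original family.

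The main obstacle I anticipate is the analysis of $M(h)$ across the matching value $h^*$: one must be careful that the piecewise-defined Melnikov function is regular enough there (it is $C^1$ because the flow is continuous and the section is transverse, but this needs justification), and one must control the three integrals $I_-,I_o,I_+$ — which are genuinely different transcendental functions — well enough near the endpoints of the annulus and near $h^*$ to guarantee exactly the sign pattern $+,-,+$ (or its reverse) that produces two simple zeros, rather than zero or one. This is where the bulk of the estimates will go, and it is essentially the piecewise-smooth analogue of bounding the number of zeros of an abelian integral.
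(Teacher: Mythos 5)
There is a genuine gap, and it lies in the geometry of the period annulus you are perturbing. In the configuration of the theorem the center sits at the point $(1,0)\in L_+$ where the foci of $X_o$ and $X_+$ coalesce (in the normal form, $b_2=-1$ and $\gamma_o+\gamma_+=0$), and the period annulus is \emph{bounded}: its inner boundary is that equilibrium and its outer boundary is the periodic orbit tangent to $L_-$ at $(-1,0)$ (Figure \ref{figura1a}). Consequently every periodic orbit of the unperturbed annulus visits only the two zones $R_o$ and $R_+$; there is no ``outer (three-zone) part'' of the annulus, so the second simple zero of $M(h)$ that you want to place at $h>h^*$ has nothing to bifurcate from. (Your description of the inner orbits as visiting $R_-$ and $R_o$ is also off: the center is on $L_+$, not $L_-$.) In the paper the three-zone limit cycle is not obtained from the annulus at all: it comes from a global ingredient, namely that the full return map $\Pi=\bar{\pi}_o\circ\pi_+\circ\pi_o\circ\pi_-$ satisfies $\lim_{c\to\infty}\Pi'(c)=e^{\gamma_+\pi}$ (equation \eqref{eq:15}), so that infinity is a repeller (resp.\ attractor) when $\gamma_+<0$ (resp.\ $\gamma_+>0$); pairing this with the repelling/attracting character of the region left by the broken annulus traps a three-zone cycle. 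Without some substitute for this behavior at infinity, your plan produces at most one limit cycle.

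A second, smaller but still real, gap is that the sign pattern you need for $M$ is asserted rather than proved: ``tuning $\alpha/\beta$'' and ``choosing $\gamma$ of appropriate sign'' is precisely the content that must be computed, and here the relevant perturbation is essentially one-dimensional (moving $b_2$ off $-1$ is what separates the two foci), so you do not have independent coefficients to tune. The paper's substitute is an explicit second-order expansion in $(b_2+1)$ of the half-maps $\Pi_o(0)$ and $\Pi_+^{-1}\circ\bar{\Pi}_o^{-1}(0)$, which yields the sign of the displacement $a_o^*-a_o^+$ at the broken outer boundary (formula \eqref{eq:14} and Proposition \ref{prop.9.2}); the two-zone limit cycle is then produced by a Poincar\'e--Bendixson trapping argument between the now-real focus of $X_+$ and the orbit through the point associated to $a_o^+$, not by persistence of a simple zero of a Melnikov integral. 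Your framework could plausibly be repaired to give the two-zone cycle, but the three-zone cycle genuinely requires the analysis of the return map near infinity together with the monotonicity and convexity properties of the half-maps $\pi_\pm$, $\pi_o$, $\bar{\pi}_o$ established in Propositions \ref{prop mau 1}--\ref{prop.11}.
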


\begin{remark} 
Explicit conditions for the existence of at least two limit cycle in terms of the parameters of system {\rm (\ref{system 1})} are techniques, depend of some ingredients defined in the paper, and are summarized in Proposition \ref{teo two limit cycle} of Section \ref{sec Lim b_2< -1}.
\end{remark}

The paper is organized as follows. In Section \ref{sec Norm Form}
we obtain a normal form to system \eqref{system 1} that simplifies
the computations. In Section \ref{sec Poin Map} we study the
behavior of the Poincar\'e maps associated to system \eqref{system 1} that will be used to study the problem and at the Section
\ref{sec Lim b_2< -1} we prove Theorem \ref{the_paper2:01}. Finally, in Section \ref{sec examples} we give two examples that resume the bifurcations of limit cycles to system \eqref{system 1} under hypothesis (H1) and (H2). 

\section{Normal Form}
\label{sec Norm Form}

The following result, proved in  \cite{Mauricio}, give us a
convenient normal form  to write system (\ref{system 1}) with the
number of parameters reduced, and so it will be useful to obtain
the Poincar\'{e} return map and its derivatives.

\begin{lemma}  Suppose that system (\ref{system 1}) is such that $d_o>0.$ Then there exists
a linear change of coordinates that writes system {\rm
(\ref{system 1})} into the form $\dot{{\bf x}}=X({\bf x}),$
with $L_-=\{x=-1\},$ $L_+=\{x=1\},$
$R_-=\{(x,y)\in\mathbb{R}^2;\;x\leq-1\},$
$R_o=\{(x,y)\in\mathbb{R}^2;\;-1\leq x\leq1\},$
$R_+=\{(x,y)\in\mathbb{R}^2;\;x\geq1\}$ and

\begin{equation}
X({\bf x})=\left\{\begin{array}{ll}
                      A_-{\bf x}+B_- & {\bf x}\in R_-, \\
                      A_o{\bf x}+B_o & {\bf x}\in R_o, \\
                      A_+{\bf x}+B_+ & {\bf x}\in R_+,
                    \end{array}\right.\label{base system nf}
\end{equation}
where $A_-=\left(\begin{array}{cc}
                       a_{11} & -1 \\
                       1-b_2+d_2 & a_1
                     \end{array}\right),$ $B_-=\left(\begin{array}{c}
                                                       a_{11} \\
                                                       d_2
                                                     \end{array}\right),$
$A_o=\left(\begin{array}{cc}
                       0 & -1 \\
                       1 & a_1
                     \end{array}\right),$ $B_o=\left(\begin{array}{c}
                                                       0 \\
                                                       b_2
                                                     \end{array}\right),$ $A_+=\left(\begin{array}{cc}
                       c_{11} & -1 \\
                       1+b_2-f_2 & a_1
                     \end{array}\right)$ and $B_+=\left(\begin{array}{c}
                                                       -c_{11} \\
                                                       f_2
                                                     \end{array}\right).$ The dot denotes derivative with respect
to a new time $s.$\label{nf}
\end{lemma}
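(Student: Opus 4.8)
The plan is to construct the coordinates in three stages. For notation, let $\ell\in\R^2$ span the common direction of the two parallel lines $L_-,L_+$, and recall that $X$ is by hypothesis continuous across them. First I would simultaneously rectify the lines and put $A_o$ in companion form; then fix the determinant by a time rescaling and clear the first entry of $B_o$ by a translation; finally I would use continuity across $L_\pm$ to read off the shapes of $A_\pm,B_\pm$. The only delicate point is the invertibility of the conjugating matrix, which I discuss last.

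Since $d_o>0$, the time rescaling $s=\sqrt{d_o}\,t$ divides the field by $\sqrt{d_o}$ and makes $\det A_o=1$; write $a_1=t_o/\sqrt{d_o}$ for the new trace, so the characteristic polynomial of $A_o$ is $\lambda^2-a_1\lambda+1$. Now set $p_1=A_o^{-1}\ell$ and $p_2=\ell=A_o p_1$, and let $P=[\,p_1\mid p_2\,]$. By the Cayley--Hamilton identity $A_o^2=a_1A_o-I$ one gets $A_o p_2=A_o^2p_1=a_1p_2-p_1$, so in the basis $p_1,p_2$ the matrix becomes $P^{-1}A_oP=\begin{pmatrix}0&-1\\1&a_1\end{pmatrix}$, exactly the required $A_o$. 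Because the second column $p_2=\ell$ points along the switching lines, these become vertical in the new coordinates; as the lines are symmetric about the origin and a common rescaling of $p_1,p_2$ (by a factor of either sign) leaves $P^{-1}A_oP$ unchanged, I can place them exactly at $x=\pm1$ with $(0,0)\in R_o$ and with the prescribed labelling of $R_\pm$. Finally, a translation ${\bf x}\mapsto{\bf x}+(0,s)^\top$ preserves the vertical lines and the companion matrix while changing $B_o$ by $sA_oe_2=s(-1,a_1)^\top$; choosing $s$ equal to the first component of $B_o$ yields $B_o=(0,b_2)^\top$.

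For the outer pieces I would invoke continuity. Across $L_-=\{x=-1\}$ the affine fields $X_o$ and $X_-$ agree, so their difference is affine and vanishes on $\{x=-1\}$, hence equals $(x+1)\,w_-$ for a constant $w_-\in\R^2$. This forces $A_o-A_-=w_-e_1^\top$ and $B_o-B_-=w_-$; that is, $A_-$ shares the second column $(-1,a_1)^\top$ with $A_o$, while its first column and $B_-$ are free subject to $B_-=B_o-w_-$. Writing the first column of $A_-$ as $(a_{11},\,1-b_2+d_2)^\top$ introduces the free parameters $a_{11},d_2$ and gives $w_-=(-a_{11},\,b_2-d_2)^\top$ and $B_-=(a_{11},d_2)^\top$, matching the statement. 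Repeating on $L_+=\{x=1\}$, where the difference is $(x-1)\,w_+$, produces the stated $A_+$ and $B_+$ with parameters $c_{11},f_2$.

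The main obstacle is ensuring that $P$ is invertible, i.e. that $p_1=A_o^{-1}\ell$ is not parallel to $p_2=\ell$; equivalently, that $\ell$ is not an eigenvector of $A_o$ (equivalently, that the $(1,2)$ entry of $A_o$ is nonzero once the lines are vertical). This is where the structure really enters: when $A_o$ has non-real eigenvalues---the situation of interest, since under {\rm (H1)} the central equilibrium is a focus---$A_o$ has no real eigenvectors and the condition holds automatically, so the whole construction goes through. In the degenerate real-eigenvalue case one would have to assume separately that $\ell$ avoids the two eigendirections; this excludes only a non-generic configuration and is not needed for the applications. Everything else is the routine bookkeeping of a rank-one continuity correction and a determinant-normalising time change.
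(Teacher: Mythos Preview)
The paper does not actually prove this lemma: it states the result and cites \cite{Mauricio} for the proof. So there is no in-paper argument to compare yours against. On its own merits your construction is the standard one and is correct: the cyclic-vector basis $(A_o^{-1}\ell,\ell)$ simultaneously puts $A_o$ into companion form and makes the switching lines vertical; a common scalar rescale of the basis places them at $x=\pm1$ with the desired labelling; the time rescale normalises $\det A_o$; and the rank-one continuity correction across each boundary forces the second columns of $A_\pm$ to agree with that of $A_o$ and ties $B_\pm$ to the free first-column parameters exactly as in the statement.

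Two small remarks. First, your translation step makes the overall change of variables affine rather than literally linear; the phrase ``linear change of coordinates'' in the lemma is evidently being used loosely (the explicit ``new time $s$'' already signals that more than a bare conjugation is intended), so this is a terminological rather than a mathematical point. Second, as you correctly isolate, the construction requires that $\ell$ not be an eigenvector of $A_o$, and the lemma as stated only assumes $d_o>0$, so this edge case is not covered by your argument. Every application in the present paper, however, is made under hypothesis (H1), where $A_o$ has non-real spectrum and hence no real eigendirections, so the gap is immaterial for the results that follow.
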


We call {\it contact point} to a point of a straight line where the vector field is tangent to it.

The next lemma, which proof can be found in \cite{ClauWeMau}, characterizes the equilibria of the vector fields
$X_i$, $i\in \{-, o, +\}$, in real or virtual with respect the
sign of the parameter $b_2$ of the normal form \eqref{base system
nf}.

\begin{lemma}\label{equilibrium}
In the coordinates given by Lemma \ref{nf}, there is a unique
contact point of system {\rm (\ref{base system nf})} with each one
of the straight lines $L_-$ and $L_+$. These points are
respectively $p_-=(-1,0)$ and $p_+=(1,0).$ Moreover under the
assumptions {\rm (H1)} and {\rm (H2)}, we have
\begin{itemize}
\item[(i)] if $b_2<-1$, the equilibrium points of $X_-$ and $X_o$ are virtual
and the equilibrium point of $X_+$ is real;

\item[(ii)] if $b_2=-1$, $X_o$ and $X_+$ have an equilibrium point at $(1,0)$, and $X_-$ has a virtual equilibrium point;

\item[(iii)] if $|b_2|<1$, the equilibrium points of $X_-$ and $X_+$ are virtual
and the equilibrium point of $X_o$ is real;

\item[(iv)] if $b_2=1$, $X_-$ and $X_o$ have an equilibrium point at $(-1,0)$, and $X_+$ has a virtual equilibrium point;

\item[(v)] if $b_2>1$, the equilibrium points of $X_o$ and $X_+$ are virtual
and the equilibrium point of $X_-$ is real.
\end{itemize}
\end{lemma}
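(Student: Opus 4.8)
The plan is to reduce the statement to explicit computations within the normal form \eqref{base system nf}, the only non-computational ingredient being that hypotheses (H1)--(H2) force the determinants $d_-,d_o,d_+$ to be positive.

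First I would locate the contact points. The lines $L_\pm$ are vertical, so the flow is tangent to them exactly where the first component of the field vanishes. A direct computation shows that the first component of $X_o$ equals $-y$ on each of $L_\pm$, and by the continuity of $X$ across $L_\pm$ the same holds for $X_-$ on $L_-$ and for $X_+$ on $L_+$. Hence $\dot x=-y$ there, which vanishes only at $y=0$; this gives the unique contact points $p_-=(-1,0)$ and $p_+=(1,0)$, uniqueness being immediate since $-y$ is affine and non-constant in $y$.

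Next I would solve $A_i{\bf x}+B_i=0$ for each $i$. One finds that $X_o$ has equilibrium $(-b_2,0)$; that the equilibrium of $X_-$ lies on the line $y=a_{11}(x+1)$ through $p_-$, with abscissa $x_-^*=-N/d_-$ where $N=a_1a_{11}+d_2$ and $d_-=\det A_-=N+1-b_2$; and, symmetrically, that the equilibrium of $X_+$ lies on $y=c_{11}(x-1)$ through $p_+$, with abscissa $x_+^*=M/d_+$ where $M=a_1c_{11}-f_2$ and $d_+=\det A_+=M+1+b_2$. Since each $d_i\neq 0$, each $A_i$ is invertible and the equilibrium is unique. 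The key simplification is that, by (H2), the equilibria of $X_-$ and $X_+$ are a center and a focus, so their eigenvalues are complex and therefore $d_->0$ and $d_+>0$ (while $d_o=1>0$ automatically). With the signs of the denominators pinned down, the real/virtual alternative reduces to a linear inequality in $b_2$: one checks $(-b_2,0)\in R_o \Leftrightarrow |b_2|\le 1$; that $x_-^*\le -1 \Leftrightarrow b_2\ge 1$; and that $x_+^*\ge 1 \Leftrightarrow b_2\le -1$.

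Finally I would assemble the five cases by combining these three inequalities, handling the transition values $b_2=\pm1$ separately: substituting $b_2=-1$ gives $x_o^*=x_+^*=1$, so both $X_o$ and $X_+$ vanish at $p_+$, and $b_2=1$ gives $x_o^*=x_-^*=-1$, so both $X_o$ and $X_-$ vanish at $p_-$. These shared-equilibrium statements in cases (ii) and (iv) are in fact also forced directly by continuity, since if $X_o$ vanishes at a point of $L_\pm$ then so does $X_\pm$. The only point demanding care---rather than a genuine obstacle---is the justification of $d_\pm>0$ from (H2) and the consistent bookkeeping at $b_2=\pm1$, where the relevant equilibrium lies exactly on $L_\pm$ and is simultaneously an equilibrium of two adjacent fields.
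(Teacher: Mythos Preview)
Your proof is correct. The paper does not actually prove this lemma; it only states that the proof can be found in \cite{ClauWeMau}. Your argument is a self-contained direct verification in the normal form: computing the first component of the field on $L_\pm$ to locate the contact points, solving $A_i\mathbf{x}+B_i=0$ explicitly, and using that (H1)--(H2) force complex eigenvalues and hence $d_\pm>0$ to reduce the real/virtual dichotomy to the linear inequalities $b_2\ge 1$, $|b_2|\le 1$, $b_2\le -1$. This is exactly the natural computation one would expect in the cited reference, and your bookkeeping at the boundary values $b_2=\pm 1$ (including the continuity remark that $X_o(p_\pm)=0$ forces $X_\pm(p_\pm)=0$) is correct.
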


From Lemmas \ref{nf} and \ref{equilibrium} we can divide the problem of study the existence of limit cycles for system \eqref{system 1} with hypothesis (H1) and (H2) in  the cases given in the table below.
\medskip
\begin{table}[h]
\scalefont{0.9}
\caption{All the cases under the hypothesis (H1) and (H2).}
{\begin{tabular}{|c|c|c|c|c|c|}
\hline
& Case $b_2<-1$ & Case $b_2=-1$ & Case $|b_2|<1$ & $b_2=1$ & $b_2>1$ \\
 \hline
 Equilibrium of $X_-$ & Virtual & Virtual & Virtual & $(-1,0)$ & Real \\
  \hline
 Equilibrium of $X_o$ & Virtual & $(1,0)$ & Real & $(-1,0)$ & Virtual \\
  \hline
 Equilibrium of $X_+$ & Real & $(1,0)$ & Virtual & Virtual & Virtual \\
 \hline
 \end{tabular}}
 \end{table}

The Case $|b_2|<1$ has been studied in \cite{Mauricio}, in this paper it is showed that  under the hypothesis (H1) and (H2) and additionally assuming that $X_o$ has a real
 focus in the interior of $R_o$, system \eqref{system 1} has a unique limit cycle. Some another cases have been studied in  \cite{ClauWeMau}, where it is showed that under the hypothesis (H1) and (H2) and additionally assuming that
  \begin{itemize}
  \item[(a)] $X_o$ has a virtual focus and
$X_+$ (respectively $X_-$) has a real center, system {\rm (\ref{system 1})} has a
unique limit cycle, which is hyperbolic;
\item[(b)]  $X_o$ has a real focus at
the boundary of $R_o$, system {\rm (\ref{system 1})} has a
unique limit cycle, which is hyperbolic. Except when the focus of $X_o$ belongs to $L_+$ (respectively $L_-$) and $X_+$ (respectively $X_-$) also
has a focus and both foci give rise to a center for system
\eqref{system 1}. In this case system \eqref{system 1} has no
limit cycles.
 \end{itemize}
Note that,  the Cases $b_2>1$ and $b_2=1$ are equivalent, by a rotation through of  an angle of $180^o$, to the Cases $b_2<-1$ and $b_2=-1$, respectively.  Therefore, from above results, remains only the case $b_2<-1$ with $X_+$ having a real focus to be studied. Here, Theorem \ref{the_paper2:01} shows that we have at least two limit cycles in this case. A complete study of the case $b_2<-1$, when $X_+$ has a real focus, it is still an open problem.

\section{Poincar\'{e} Return Map}
\label{sec Poin Map}

In this section we will rewrite the problem of finding limit cycles that visit the three zones $R_i$, $i\in\{-, o, +\}$ or even only two of them in terms of finding the fixed points of an appropriated Poincar\'{e} return map. This Poincar\'{e} return map will be the composition of four or two different Poincar\'{e} maps, according to the number of zones. As in \cite{ClauWeMau}, these Poincar\'{e} maps will be defined in the transversal sections $L_\pm^{O}=\{(\pm1,y);\,y\geq0\}$ and $L_\pm^{I}=\{(\pm1,y);\,y\leq0\}.$ In order to study the qualitative behavior of each one of these maps, we will do a convenient parameterization in the transversal sections $L_\pm^{O}$ and $L_\pm^{I}$  assuming $b_2\neq \pm 1$. We parametrize $L_-^{O}$ by the
parameter $c$ defined as follows. Let $p_-=(-1,0)$ be the contact point of $X_-$ with $L_-$ and
$\dot{p}_-=X_-(p_-)=(0,b_2-1).$ Given, $p\in L_-^{O}$  we take $c\geq 0$ as the unique non-negative real satisfying  $p=p_--c \dot{p}_-.$ Similarly we parametrize
$L_-^{I}$ by the parameter $d$, i.e. given $p\in L_-^{I}$ we take $d\geq 0$ as the unique non-negative real satisfying $p=p_-+d \dot{p}_-$.

In a very close way we parametrize $L_+^{O}$ by the parameter $b$ as
follows. Let $p_+=(1,0)$ be the contact point of $X_+$ with $L_+$ and $\dot{p}_+=X_+(p_+)=(0, b_2 + 1).$ Given, $p\in L_+^{O}$ we take $b\geq 0$ as the unique non-negative real stisfying $p=p_+-b \dot{p}_+.$ In a
similar way we parametrize $L_+^{I}$ by the parameter $a,$ i.e.
given $p\in L_+^{I}$  we take $a\geq 0$ as the unique non-negative real satisfying
$p=p_++a \dot{p}_+$.

{For} study the limit cycles of system (\ref{base system nf}) that visit the three zones $R_i$, $i\in\{-, o, +\}$, the Poincar\'{e} return map $\Pi$ is defined on $L_-^O$ and its fixed points correspond to these limit cycles.  This map involves all the vector fields $X_i$, $i\in \{-, o, +\}$, and has the form
\[
\Pi=\bar{\pi}_o\circ\pi_+\circ \pi_o\circ\pi_-,
\]
where the Poincar\'{e} maps $\bar{\pi}_o$, $\pi_+$, $\pi_o$, and $\pi_-$ will be defined as follows.

The vector field $X_-$ points
toward the region $R_-$ in $L_-^O$ while in $L_-^{I}$ the vector field points
toward the region $R_o.$ Then we can define a Poincar\'{e} map $\Pi_-$ from $L_-^{O}$ to $L_-^{I}$ by $\Pi_-(p)=q$ as being the first return map in forward
time of the flow of $X_-$ to $L_-,$ i.e. if $\varphi_-(s,p)$ is the solution of $\dot{\bf x}=A_-{\bf
x}+B_-$ such that $\varphi_-(0,p)=p$ and $p\in L_-^{O},$ then
$q=\varphi_-(s,p)\in L_-^{I}$ with $s\geq0$. Note that
$\Pi_-(p_-)=p_-.$

Now, using the parametrizations of $L_\pm^O$ and $L_\pm^I$
previously defined, given $p\in L_-^{O}$
and $q\in L_-^{I}$ there exist unique $c\geq0$ and $d\geq0$ such
that $p=p_--c\dot{p}_-$ and
$q=p_-+d\dot{p}_-.$ So $\Pi_-$ induces a mapping
$\pi_-$ given by $\pi_-({c})=d.$ Observe that the qualitative
behavior of $\Pi_-$ is equivalent to the qualitative behavior of
$\pi_-.$

Analogously we can define the Poincar\'{e} maps $\Pi_+$ from
$L_+^I$ to $L_+^O$, through the flow of $X_+$, $\Pi_o$ from
$L_-^I$ to $L_+^I$ and $\bar{\Pi}_o$ from $L_+^O$ to $L_-^O$,
bouth through of the flow of $X_o$ and, by the parametrization
defined to this transversal sections, we obtain the correspondents
induced   Poincar\'{e} maps $\pi_+$, $\pi_o$ and $\bar{\pi}_o$ as
above.

The maps  $\bar{\pi}_o$, $\pi_+$, $\pi_o$, and $\pi_-$ are invariant under linear change of coordinates
and translation, see  Proposition 4.3.7 in \cite{Llibre1}. Then, in the Case $b_2<-1$, to compute the maps $\pi_\pm$ we can suppose that the equilibria of $X_\pm$ are at the origin and that the matrices $A_\pm$ are given in their real Jordan normal form. 

In what follows, we will consider the maps $\bar{\pi}_o$, $\pi_+$, $\pi_o$, and $\pi_-$ instead of $\bar{\Pi}_o$, $\Pi_+$, $\Pi_o$, and $\Pi_-$. We will study the qualitative behavior of each one of these maps separately in order to understand the global behavior of the general Poincar\'{e} return map. 

The next lemma will be useful in the study of the Poincar\'{e}
return map associated to system (\ref{base system nf}). The proof
of this lemma can be found in \cite{Llibre1} Lemma 4.4.10 and also
in \cite{Mauricio}.

\begin{lemma}
Consider the function $\varphi:\mathbb{R}^2\rightarrow\mathbb{R}$
given by $\varphi(x,y)=1-e^{xy}(\cos x-y\sin x).$ The qualitative
behavior of $\varphi(x,y_o)$ in $(-\infty,2\pi)$ is represented in
Figure \ref{grafico1}  when $y_o>0,$ and in Figure \ref{grafico2}
when $y_o<0.$ \label{basicfunction}
\end{lemma}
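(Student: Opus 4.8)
The plan is to reduce the whole statement to the partial derivative of $\varphi$ in the variable $x$, which turns out to be remarkably simple. Writing $g(x)=\varphi(x,y_o)=1-e^{xy_o}(\cos x-y_o\sin x)$ and differentiating by the product rule, the two resulting terms combine after the cancellation $y_o\cos x-y_o\cos x=0$, and I expect to obtain
\[
\frac{\partial\varphi}{\partial x}(x,y_o)=(1+y_o^2)\,e^{xy_o}\sin x .
\]
Since the prefactor $(1+y_o^2)e^{xy_o}$ is strictly positive for every $x$, the sign of $g'$ coincides with the sign of $\sin x$. Hence the only critical points are $x=n\pi$, $n\in\mathbb{Z}$, and $g$ is strictly monotone on each interval $(n\pi,(n+1)\pi)$, increasing where $\sin x>0$ and decreasing where $\sin x<0$. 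In particular $g$ admits no hidden oscillation between consecutive multiples of $\pi$, so its graph is completely determined by the ordered list of critical values and the behavior at the ends of the interval.

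Second, I would evaluate $g$ at the critical points. A direct substitution gives $g(n\pi)=1-(-1)^{n}e^{n\pi y_o}$, so the even multiples $x=2k\pi$ are local minima with value $1-e^{2k\pi y_o}$ and the odd multiples $x=(2k+1)\pi$ are local maxima with value $1+e^{(2k+1)\pi y_o}$. The special value $g(0)=0$ shows that the graph touches the horizontal axis at the origin from above (a local minimum equal to $0$), which pins down the picture near $x=0$ in both regimes.

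Third, I would describe the two regimes and locate the zeros on $(-\infty,2\pi)$. For $y_o>0$ one has $e^{xy_o}\to0$ as $x\to-\infty$, so $g(x)\to1$ and the successive extrema $1\mp e^{n\pi y_o}$ converge geometrically to $1$; on the right, $g$ rises from $0$ to the peak $1+e^{\pi y_o}$ at $x=\pi$ and then decreases to $g(2\pi)=1-e^{2\pi y_o}<0$, so by strict monotonicity on $(\pi,2\pi)$ it has exactly one zero there. For $y_o<0$ the factor $e^{xy_o}$ blows up as $x\to-\infty$, so the extremal values $1\mp e^{n\pi y_o}$ are unbounded of both signs; strict monotonicity between consecutive critical points then forces exactly one zero in each interval $(n\pi,(n+1)\pi)$ with $n<0$, i.e.\ infinitely many zeros whose amplitude grows. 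On the right, $g$ now decreases from $1+e^{\pi y_o}$ to $g(2\pi)=1-e^{2\pi y_o}>0$, so $x=0$ is the only zero on $[0,2\pi)$. Matching these monotonicity intervals, critical values and asymptotics with the two figures in the statement completes the argument.

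The only non-mechanical point is the cancellation that produces the clean formula for $\partial_x\varphi$; once that is in hand, everything else is sign bookkeeping for $\sin x$ and for $1\mp e^{n\pi y_o}$, together with the two limits $x\to-\infty$. I therefore expect no genuine obstacle beyond checking that the described extrema and zero counts agree, line by line, with the two figures.
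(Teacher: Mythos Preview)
Your argument is correct. The key identity $\partial_x\varphi(x,y_o)=(1+y_o^2)e^{xy_o}\sin x$ is exactly right (the cancellation works as you describe), and from it the location of critical points at $x=n\pi$, the values $g(n\pi)=1-(-1)^n e^{n\pi y_o}$, the tangency $g(0)=0$, the asymptotics as $x\to-\infty$, and the zero counts on $(-\infty,2\pi)$ all follow just as you outline.

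The paper itself does not prove this lemma: it simply cites Lemma~4.4.10 of \cite{Llibre1} and \cite{Mauricio}. Your write-up therefore supplies a self-contained elementary proof where the paper defers to the literature; the approach you take (reducing everything to the sign of $\sin x$ via the derivative formula) is in fact the standard one used in those references, so there is no real methodological divergence---you have simply reproduced the argument rather than pointed to it.
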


\begin{figure}[!htb]
\begin{minipage}[b]{0.45\linewidth}
\centering
\includegraphics[width=4cm,height=5cm,angle=-90]{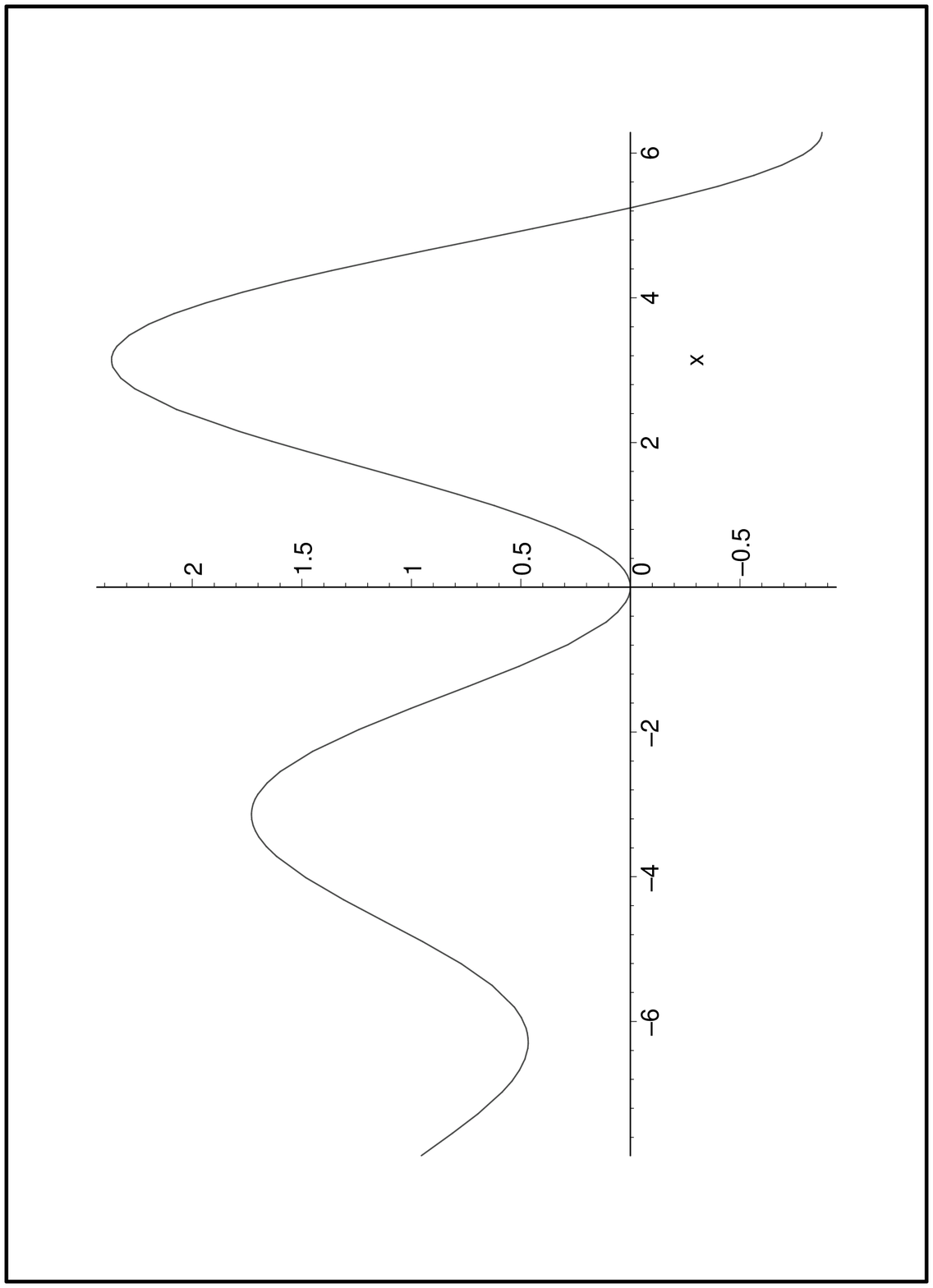}
\caption{Function $\varphi(x,y_o)$ for $y_o>0$.} \label{grafico1}
\end{minipage}
\hspace{0.5cm}
\begin{minipage}[b]{0.45\linewidth}
\centering
\includegraphics[width=4cm,height=5cm,angle=-90]{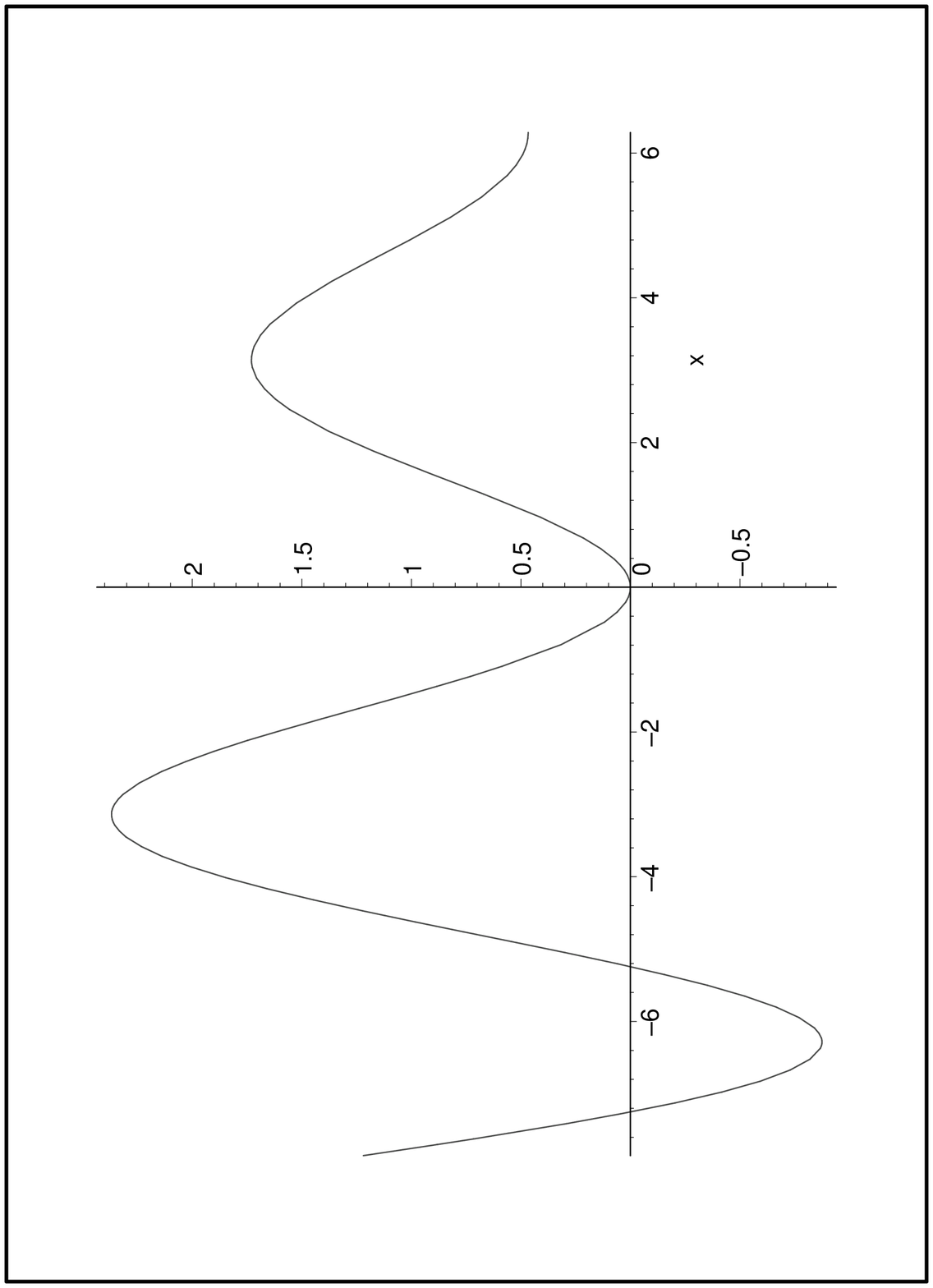}
\caption{Function $\varphi(x,y_o)$ for $y_o<0$.} \label{grafico2}
\end{minipage}
\end{figure}

\begin{proposition}\label{prop mau 1}
Consider the vector field $X_-$ in $R_-$ with a virtual center or  focus
equilibrium and such that $t_- \geq 0.$ Let $\pi_-$ be the map
associated to the Poincar\'{e} map $\Pi_-:L_-^O\rightarrow L_-^I$
defined by the flow of the linear system $\dot{{\bf x}}=A_-{\bf
x}+B_-.$
\begin{itemize}
\item[a] If $t_->0$, then the map $\pi_{-}$ is such that
$\pi_{-}:[0,\infty)\rightarrow[0,\infty),$ $\pi_{-}(0)=0,$
$\displaystyle\lim_{c\rightarrow\infty}\pi_{-}({c})=+\infty$ and
$\pi_{-}(c)>c$ in $(0,\infty).$
\begin{itemize}
\item[(a.1)] If $c\in(0,\infty)$, then
$(\pi_{-})'(c)=\dfrac{c}{\pi_-(c)}e^{2\gamma_-\tau_-}.$ Moreover
$(\pi_{-})'(c)>1$ and
$\displaystyle\lim_{c\rightarrow0}(\pi_-)'(c)=1.$

\item[(a.2)] If $c\in(0,\infty)$, then $(\pi_{-})''(c)>0.$

\item[(a.3)] The straight line
$d=e^{\gamma_-\pi}c-t_-(1+e^{\gamma_-\pi})/d_-$ in the plane
$(c,d)$ is an asymptote of the graph of $\pi_-$ when $c$ tends to
$+\infty$ where $\gamma_-=t_-/\sqrt{4d_--t_-^2}.$ So
$\displaystyle\lim_{c\rightarrow\infty}(\pi_-)'(c)=e^{\gamma_-\pi}.$
\end{itemize}

\item[(b)] If $t_-=0$, then $\pi_{-}$ is the identity map in $[0,\infty).$
 \end{itemize}
\end{proposition}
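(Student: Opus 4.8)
The plan is to follow the method of \cite{Mauricio} and \cite{Llibre1}: reduce $X_-$ to a normal form, write $\pi_-$ in parametric form through the angle swept by the flow between the two sections, and then read all the stated properties off explicit elementary formulas in which the function $\varphi$ of Lemma~\ref{basicfunction} appears.

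\emph{Reduction and parametric form.} Since the maps associated to the Poincar\'e map are invariant under affine changes of coordinates (Proposition~4.3.7 of \cite{Llibre1}), I may assume that the equilibrium of $X_-$ sits at the origin and $A_-$ is in real Jordan normal form; composing further with a rotation and a homothety I may also assume $L_-=\{x=k\}$ with the (virtual) equilibrium to the right of this line, which guarantees that the contact point is unique and $\pi_-$ is defined on all of $[0,\infty)$. As $X_-$ has a center or focus with $t_-\geq 0$, its eigenvalues are $\alpha\pm i\beta$ with $\alpha=t_-/2\geq 0$, $\beta=\frac12\sqrt{4d_--t_-^2}>0$, $\gamma_-=\alpha/\beta$, and the flow of $\dot{\bf x}=A_-{\bf x}$ dilates by $e^{\alpha s}$ while rotating by the angle $\beta s$. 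The contact point is $p=(k,\gamma_- k)$ with $\dot p=X_-(p)=(0,kd_-/\beta)$, so points of $L_-^O$ are $Q(c)=(k,\gamma_-k-ckd_-/\beta)$ and points of $L_-^I$ are $Q'(d)=(k,\gamma_-k+dkd_-/\beta)$. Writing $\tau_-$ for the angle swept by the orbit through $Q(c)$ before it first returns to $\{x=k\}$, a short computation with the rotation formula (imposing that the first coordinate return to $k$) yields, for $\tau_-\in(0,\pi)$,
\[
c=\frac{\beta}{d_-}\cdot\frac{e^{-\gamma_-\tau_-}-\cos\tau_-+\gamma_-\sin\tau_-}{\sin\tau_-},\qquad
d=\frac{\beta}{d_-}\cdot\frac{e^{\gamma_-\tau_-}-\cos\tau_--\gamma_-\sin\tau_-}{\sin\tau_-};
\]
the numerator of $c$ is exactly $e^{-\gamma_-\tau_-}\varphi(\tau_-,\gamma_-)$, and $d(\tau_-)$ is $c(\tau_-)$ with $\gamma_-$ replaced by $-\gamma_-$ (time reversal).

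\emph{Case $t_->0$.} As $\tau_-\to 0^+$ both numerators are $\sim\frac12(1+\gamma_-^2)\tau_-^2$ while $\sin\tau_-\sim\tau_-$, so $c,d\to 0$ and $c/d\to 1$, giving $\pi_-(0)=0$ and $\lim_{c\to 0}(\pi_-)'(c)=1$; as $\tau_-\to\pi^-$, $\sin\tau_-\to 0^+$ while the numerators tend to the positive constants $1+e^{-\gamma_-\pi}$ and $1+e^{\gamma_-\pi}$, so $c,d\to+\infty$, proving $\lim_{c\to\infty}\pi_-(c)=+\infty$. The inequality $\pi_-(c)>c$ follows from $d-c=\frac{2\beta}{d_-\sin\tau_-}(\sinh(\gamma_-\tau_-)-\gamma_-\sin\tau_-)$, which is positive on $(0,\pi)$ since $\sinh u>u$ and $\tau_->\sin\tau_->0$. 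For (a.1), once it is known that $c(\tau_-)$ is a diffeomorphism of $(0,\pi)$ onto $(0,\infty)$, I differentiate along the parameter, $(\pi_-)'(c)=(dd/d\tau_-)/(dc/d\tau_-)$, and the explicit expressions collapse (using the two displayed identities) to $(\pi_-)'(c)=\frac{c}{d}e^{2\gamma_-\tau_-}=\frac{c}{\pi_-(c)}e^{2\gamma_-\tau_-}$, the factor $e^{2\gamma_-\tau_-}=e^{t_-\tau_-/\beta}$ being the exponential of the integrated divergence along the arc; letting $c\to 0$ recovers $\lim_{c\to0}(\pi_-)'(c)=1$.

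\emph{Convexity, asymptote, and the center case.} For (a.2) I would compute $(\pi_-)''(c)=(d''c'-d'c'')/(c')^3$ from the parametric form and show that $c'(\tau_-)>0$ and $d''c'-d'c''>0$ on $(0,\pi)$; after clearing the factors $\sin\tau_-$ and $e^{\pm\gamma_-\tau_-}$ this reduces to the positivity on $(0,\pi)$ of explicit combinations of $1,\cos\tau_-,\sin\tau_-,\gamma_-\tau_-$ and $e^{\pm\gamma_-\tau_-}$, which is exactly the type of information recorded in Lemma~\ref{basicfunction} and Figures~\ref{grafico1}--\ref{grafico2}. I expect this sign analysis (equivalently, the monotonicity of $c(\tau_-)$ together with the convexity of the graph of $\pi_-$) to be the main obstacle; granting it, $(\pi_-)'$ is strictly increasing, so $(\pi_-)'(c)>(\pi_-)'(0^+)=1$, which completes (a.1). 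For (a.3), expanding the displayed formulas at $\tau_-=\pi-\e$ gives $d/c\to e^{\gamma_-\pi}$ and $d-e^{\gamma_-\pi}c\to -\frac{2\beta\gamma_-}{d_-}(1+e^{\gamma_-\pi})=-\frac{t_-(1+e^{\gamma_-\pi})}{d_-}$ (using $\beta\gamma_-=\alpha=t_-/2$), so the stated line is an asymptote, and convexity then forces $(\pi_-)'(c)\to e^{\gamma_-\pi}$. Finally, if $t_-=0$ then $\gamma_-=0$ and the two displayed formulas coincide, $c=d=\frac{\beta}{d_-}\frac{1-\cos\tau_-}{\sin\tau_-}$, so $\pi_-$ is the identity on $[0,\infty)$; alternatively, the orbits are circles centred at the equilibrium and the chord they cut on $L_-$ is bisected by the contact point, whence $d=c$.
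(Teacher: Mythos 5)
Your proposal is correct and follows essentially the same route as the paper: the paper's proof of this proposition is just a citation to Proposition~3.2 of \cite{Mauricio}, but the method it uses there (and displays in full for the sibling Proposition~\ref{prop.8}) is exactly your reduction to real Jordan form plus the parametric representation of $c$ and $d$ through the swept angle $\tau_-$ in terms of $\varphi$. The one step you defer, the sign analysis for (a.2), is not actually an obstacle: since $\partial_x\varphi(x,y)=(1+y^2)e^{xy}\sin x$, one gets $\dfrac{dc}{d\tau_-}=\dfrac{\beta}{d_-\sin^2\tau_-}\,\varphi(\tau_-,-\gamma_-)>0$ on $(0,\pi)$ and $\dfrac{d}{d\tau_-}\dfrac{\varphi(\tau_-,\gamma_-)}{\varphi(\tau_-,-\gamma_-)}=\dfrac{2(1+\gamma_-^2)\sin\tau_-\,\bigl(\sinh(\gamma_-\tau_-)-\gamma_-\sin\tau_-\bigr)}{\varphi(\tau_-,-\gamma_-)^2}>0$, which together give $(\pi_-)''>0$, exactly as in the explicit formula recorded in the proof of Proposition~\ref{prop.8}.
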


\begin{proof}
See Proposition 3.2 of \cite{Mauricio}.
\end{proof}

%
%
%



\begin{proposition}\label{prop.8}
Consider the vector field $X_+$ in $R_+$ with a real focus
equilibrium and such that $t_+>0.$ Let $\pi_+$ be the map
associated to the Poincar\'{e} map $\Pi_+:L_+^I\rightarrow L_+^O$
defined by the flow of the linear system $\dot{{\bf x}}=A_+{\bf
x}+B_+.$
\begin{itemize}
\item[(a)] Then the maps $\pi_{+}$ is such that
$\pi_{+}:[0,\infty)\rightarrow[b^\ast_+,\infty),$
$\pi_+(0)=b^\ast_+
>0$, $\displaystyle\lim_{a\rightarrow\infty}\pi_{+}(a)=+\infty$
and $\pi_{+}(a)>a$ in $(0,\infty).$
\begin{itemize}
\item[(a.1)] If $a\in(0,\infty)$, then
$(\pi_{+})'(a)=\dfrac{a}{\pi_+(a)}e^{2\gamma_+\tau_+}.$ Moreover
$(\pi_{+})'(a)>0$ and
$\displaystyle\lim_{a\rightarrow0}(\pi_+)'(a)=0.$

\item[(a.2)] If $a\in(0,\infty)$, then $(\pi_{+})''(a)>0.$

\item[(a.3)] The straight line
$b=e^{\gamma_+\pi}a+t_+(1+e^{\gamma_+\pi})/d_+$ in the plane
$(a,b)$ is an asymptote of the graph of $\pi_+$ when $a$ tends to
$+\infty$ where $\gamma_+=t_+/\sqrt{4d_+-t_+^2}.$ So
$\displaystyle\lim_{a\rightarrow\infty}(\pi_+)'(a)=e^{\gamma_+\pi}.$
\end{itemize}

\end{itemize}
\label{prop p1}
\end{proposition}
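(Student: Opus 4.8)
The plan is to follow the scheme used for the twin statement in \cite{Mauricio} (our Proposition~\ref{prop mau 1}) and in \cite{ClauWeMau}; the one new feature is that the focus of $X_+$ is \emph{real}, and this is precisely what produces $\pi_+(0)=b^\ast_+>0$ and the limit $0$ (rather than $1$) in item (a.1). By the affine invariance of the maps $\pi_\pm$ recalled above (Proposition~4.3.7 in \cite{Llibre1}), I may place the focus of $X_+$ at the origin and bring $A_+$ to real Jordan form with eigenvalues $(t_+\pm i\sqrt{4d_+-t_+^2})/2$; after rescaling the time so that the imaginary part equals $1$, the flow becomes the logarithmic spiral $\rho(s)=\rho_0 e^{\ga_+ s}$, $\theta(s)=\theta_0+s$, with $\ga_+=t_+/\sqrt{4d_+-t_+^2}>0$ as in (a.3). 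In these coordinates $L_+^I$ and $L_+^O$ are the two opposite half-lines of a single straight line $\ell$ meeting at the point $q$ (the image of the contact point $p_+$), the line $\ell$ is tangent to the orbit through $q$, and the focus lies strictly off $\ell$ on the $R_+$ side.

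For $a\ge 0$ let $\tau=\tau_+(a)>0$ be the angle swept by the orbit issuing from the point of $L_+^I$ with parameter $a$ until it meets $\ell$ again, at the point of $L_+^O$ with parameter $b=\pi_+(a)$. Integrating the linear flow and imposing the incidence condition with $\ell$ yields, exactly as in the proof of Proposition~3.2 of \cite{Mauricio}, smooth parametric formulas $a=a(\tau)$ and $b=b(\tau)$ expressed through the function $\f(x,y)=1-e^{xy}(\cos x-y\sin x)$ of Lemma~\ref{basicfunction}; the monotonicity of $\f$ recorded there (Figures~\ref{grafico1} and~\ref{grafico2}) shows that $a(\cdot)$ is a smooth increasing bijection onto $[0,\infty)$, with $a=0$ attained at a finite $\tau=\tau^\ast>0$, and similarly for $b(\cdot)$. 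Hence $\pi_+=b\circ a^{-1}$ is smooth on $[0,\infty)$, $\lim_{a\to\infty}\pi_+(a)=+\infty$, and $\pi_+(0)=b(\tau^\ast)=:b^\ast_+$, the strict inequality $b^\ast_+>0$ being read off the explicit formula (here $\f\neq 0$ on the relevant range); geometrically, the grazing orbit through $q$, spiralling outward around the real unstable focus, re-meets $\ell$ strictly on the $L_+^O$ side.

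Item (a.1) is the standard formula for the derivative of a planar Poincar\'e map: $(\pi_+)'(a)$ is the ratio of the components of $X_+$ transverse to $\ell$ at the exit and entry points, multiplied by the exponential of the integral of $\operatorname{div}X_+$ along the connecting orbit; since $\operatorname{div}X_+\equiv t_+$ that exponential equals $e^{2\ga_+\tau_+}$, while for the parametrizations by multiples of $\dot p_+$ the ratio of transverse components is exactly $a/\pi_+(a)$, giving $(\pi_+)'(a)=\frac{a}{\pi_+(a)}e^{2\ga_+\tau_+}>0$. If $\pi_+(a)=a$ for some $a>0$, then $(\pi_+)'(a)=e^{2\ga_+\tau_+}>1$ there, which together with $\pi_+(0)=b^\ast_+>0$ is impossible for a continuous map; hence $\pi_+(a)>a$ on $(0,\infty)$. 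As $a\to 0^+$ one has $a\to 0$, $\pi_+(a)\to b^\ast_+>0$ and $\tau_+(a)\to\tau^\ast<\infty$, so $\lim_{a\to 0}(\pi_+)'(a)=0$.

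For (a.2) I differentiate (a.1) once more; equivalently, passing to the $\tau$ variable, it suffices to show that $\tau\mapsto(\pi_+)'(a(\tau))$ is increasing while $a'(\tau)>0$, which reduces to a sign inequality for an explicit combination of $\f$ and its derivatives that is settled by Lemma~\ref{basicfunction}, just as in the corresponding convexity argument of \cite{Mauricio} and \cite{ClauWeMau}. For (a.3), as $a\to\infty$ the entry and exit points recede to infinity along the two opposite half-lines of $\ell$, so the swept angle $\tau_+(a)$ tends to $\pi$; letting $\tau\to\pi$ in the parametric formulas above gives $b(\tau)/a(\tau)\to e^{\ga_+\pi}$ and $b(\tau)-e^{\ga_+\pi}a(\tau)\to t_+(1+e^{\ga_+\pi})/d_+$, which is the stated asymptote, and hence also $(\pi_+)'(a)\to e^{\ga_+\pi}$. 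I expect the genuinely delicate step to be the convexity (a.2): proving $(\pi_+)''>0$ needs a careful discussion, via Lemma~\ref{basicfunction}, of the sign of combinations of $\f$ and $\f'$, whereas everything else follows either directly from the explicit integration of the linear flow or by transcribing arguments already available in \cite{Mauricio} and \cite{ClauWeMau}.
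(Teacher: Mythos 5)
Your overall strategy coincides with the paper's: normalize so that the equilibrium of $X_+$ is at the origin and $A_+$ is in real Jordan form, parametrize the entry and exit points by the angle $\tau_+$ swept by the orbit, and express $a(\tau_+)$ and $b(\tau_+)$ through the function $\varphi$ of Lemma \ref{basicfunction}. Two of your sub-arguments are genuinely different from the paper's and both are valid. First, you obtain (a.1) from the standard divergence formula for the derivative of a transition map: since the sections are parametrized by multiples of $\dot p_+$ and $X_+$ is affine, the ratio of the wedge products $X_+\wedge\sigma'$ at entry and exit is exactly $a/\pi_+(a)$, while $\int \mathrm{div}\,X_+\,dt=t_+\,\tau_+/\beta_+=2\gamma_+\tau_+$; the paper instead differentiates the parametric equations \eqref{parametricsolution}. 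Second, you deduce $\pi_+(a)>a$ by contradiction, using that at a first fixed point one would have $(\pi_+)'=e^{2\gamma_+\tau_+}>1$ while $\pi_+(0)=b^\ast_+>0$ forces $(\pi_+)'\le 1$ there; the paper computes $b-a=-\tfrac{2\beta_+}{d_+\sin\tau_+}\bigl(\sinh(\gamma_+\tau_+)-\gamma_+\sin\tau_+\bigr)$ explicitly. These are acceptable, arguably cleaner, alternatives.

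There is, however, a genuine gap at (a.2), which you flag as ``the delicate step'' and then only assert. The convexity is not a routine transcription of the virtual-focus case: the paper's identity
\[
(\pi_+)''(a)=-\dfrac{2d_+(1+\gamma_+^2)\sin^3\tau_+}{\beta_+\,\varphi(\tau_+,-\gamma_+)^3}\bigl(\sinh(\gamma_+\tau_+)-\gamma_+\sin\tau_+\bigr)
\]
is positive only because the swept angle satisfies $\tau_+\in(\pi,\tau^\ast)$ with $\tau^\ast<2\pi$, so that $\sin\tau_+<0$ and (from $a,b\ge 0$) $\varphi(\tau_+,\pm\gamma_+)\ge 0$. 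You never establish this range of $\tau_+$, which is precisely the feature distinguishing the real focus from the virtual one, and without it the sign analysis cannot be ``settled by Lemma \ref{basicfunction}.'' Relatedly, your claim that $a(\cdot)$ is an \emph{increasing} bijection with $a'(\tau)>0$ has the wrong sign: since $a(\tau_+)\to+\infty$ as $\tau_+\to\pi^+$ and $a(\tau^\ast)=0$, one has $da/d\tau_+=-\tfrac{\beta_+}{d_+\sin^2\tau_+}\varphi(\tau_+,-\gamma_+)<0$. Your proposed reduction of (a.2) to ``$(\pi_+)'(a(\tau))$ increasing while $a'(\tau)>0$'' would therefore yield the wrong conclusion if carried out literally; the orientation must be reversed and the signs tracked through $\sin\tau_+<0$. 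The same omission makes your treatment of (a.3) incomplete: the constant term $t_+(1+e^{\gamma_+\pi})/d_+$ of the asymptote is stated but not derived (the paper obtains it by L'H\^opital applied to the parametric formulas as $\tau_+\to\pi^+$). Everything else in your outline is sound.
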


\begin{proof}
Let $p_+$ be the contact point of the flow with $L_+$ and $p$ and
$q$ such that $\Pi_+(p)=q$. As $q$ is in the orbit of $p$ in the
forward time we have that $q=\varphi(s,p)$ with $s\geq0.$ Moreover
for computing the map $\pi_+$ we can suppose that the real
equilibrium is at the origin and that matrix $A_+$ is in its real
Jordan normal form.

Let $p_+^*$ be the contact point $p_+$ in the coordinates in which
$A_+$ is in its real Jordan normal form and the real equilibrium
of $X_+$ is at the origin. We denote by $\dot{p}_+^*=X_+(p_+^*).$
So we can write
$$q=\varphi_+(s,p)=e^{A_+s}p.$$ As $p=p_+^*+a\dot{p}_+^*$ and
$q=p_+^*-\pi_+(a)\dot{p}_+^*$, we obtain
$$p_+^*-\pi_+(a)\dot{p}_+^*=e^{A_+s}(p_+^*+a\dot{p}_+^*).$$ Now using the fact
that $\dot{p}_+^*=A_+p_+^*$, we have
\begin{equation}
(Id-\pi_+(a)A_+)p_+^*=e^{A_+s}(Id+aA_+)p_+^*,\label{equation pi-}
\end{equation}
where $a\geq0,$ $\pi_+(a)\geq0,$ $s\geq0$ and the matrix $A_+$ is
given by $A_+=\left(\begin{array}{cc}
             \alpha_+ & -\beta_+ \\
             \beta_+ & \alpha_+
           \end{array}\right)$ with $\alpha_+=\dfrac{t_+}{2}.$ Since $p_+^*\neq(0,0)$,
we obtain from equation (\ref{equation pi-}), that $b=\pi_+(a)$ is
defined by the system
\begin{equation}
\begin{array}{l}
    \!\!\!\!1-b\alpha_+\!\!=e^{\alpha_+s}(\cos(\beta_+s)+a[\alpha_+\cos(\beta_+s)-\beta_+\sin(\beta_+s)]), \\
    \\
    \!\!\!\!b\beta_+\!\!= -e^{\alpha_+s}(\sin(\beta_+s)+a[\alpha_+\sin(\beta_+s)+\beta_+\cos(\beta_+s)]),
  \end{array}\label{system sol}
\end{equation}
and the inequalities $a\geq0,\,b\geq0$ and $s\geq0.$

Let $\pi_+(0)=b^\ast_+$, we have that $b^\ast_+ >0$ (see Figure
\ref{figg1}).

\begin{figure}[h!]
\begin{center}
\begin{overpic}[width=5cm,height=4.5cm]{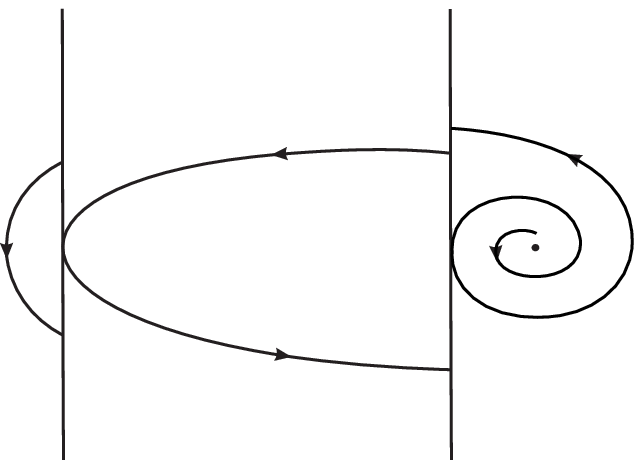}
\put(58,71){\tiny $b_+^*$} \put(6,-6){\tiny $L_-$}
\put(64,-6){\tiny $L_+$}
\end{overpic}
\end{center}
\caption{The flow of the three zones vector fields with a real
focus equilibrium and $t_+>0$.} \label{figg1}
\end{figure}

Moreover if $a=a_o,\,b=b_o$ and $s=s_o$ is a solution of
(\ref{system sol}), then $s_o$ is the flight time between the
points $p=p_+^*+a\dot{p}_+^*$ and $q=p_+^*-b\dot{p}_+^*.$

Define $\tau_+=\beta_+s$ and $\gamma_+=\alpha_+/\beta_+.$ Solving
system (\ref{system sol}) with respect to $\tau_+$ we obtain the
following parametric equations for $\pi_+(a)=b,$

\begin{equation}
\begin{array}{rr}
  a(\tau_+)=-\dfrac{\beta_+e^{-\gamma_+\tau_+}}{d_+\sin\tau_+}\varphi(\tau_+,\gamma_+) & \mbox{   and   } \\
  & \\
  b(\tau_+)=-\dfrac{\beta_+e^{\gamma_+\tau_+}}{d_+\sin\tau_+}\varphi(\tau_+,-\gamma_+), &  \\
\end{array}\label{parametricsolution}
\end{equation}
where $\varphi$ is the function described in Lemma
\ref{basicfunction}. Since $A_+$ is given in its real Jordan
normal form, $\tau_+$ is the angle covered by the solution during
the flight time $s$. Hence we conclude that $\tau_+ \in
(\pi,\tau^\ast)$, where $\tau^\ast<2\pi$. Note that $\tau^\ast$ is
the angle covered by the solution during the flight time $s^\ast$,
i.e. $e^{s^\ast A}p_+^\ast=\Pi_+(p_+^\ast)$.

 Now since
$\displaystyle\lim_{\tau_+\rightarrow\pi^+}a(\tau_+)=+\infty$ and
$\displaystyle\lim_{\tau_+\rightarrow\pi^+}b(\tau_+)=+\infty$ it
follows that the domain of definition of $\pi_+$ is $[0,+\infty)$
and $\displaystyle\lim_{a\rightarrow\infty}\pi_+(a)=+\infty.$

Moreover, when $\tau_+\in(\pi,\tau^\ast)$ we have
$$b(\tau_+)-a(\tau_+)=-\dfrac{2\beta_+}{d_+\sin\tau_+}(\sinh(\gamma_+\tau_+)-\gamma_+\sin\tau_+).$$
Since $\sinh(\gamma_+\tau_+)>\gamma_+\sin\tau_+$ when
$\tau_+\in(\pi,\tau^\ast),$ we conclude from the expression above
that $b(\tau_+)>a(\tau_+)$ if $\tau_+\in(\pi,\tau^\ast).$
Therefore $\pi_+(a)>a$ in $(0,+\infty).$ So statement (a) is
proved.

Derivating (\ref{parametricsolution}) with respect to $\tau_+$ it
follows that

$$
  \dfrac{da}{d\tau_+}=-\dfrac{\beta_+}{d_+\sin^2\tau_+}\varphi(\tau_+,-\gamma_+) $$
and
$$
\dfrac{db}{d\tau_+}=-\dfrac{\beta_+}{d_+\sin^2\tau_+}\varphi(\tau_+,\gamma_+).$$

Thus
$(\pi_+)'(a)=\dfrac{\varphi(\tau_+,\gamma_+)}{\varphi(\tau_+,-\gamma_+)}=\dfrac{a}{b}e^{2\gamma_+\tau_+}$
and $\displaystyle\lim_{a\rightarrow0}(\pi_+)'(a)=0$, because
$\displaystyle\lim_{a\rightarrow0}
b=\displaystyle\lim_{a\rightarrow0} \pi_+(a)=b^\ast_+\neq 0$.
Therefore substatement (a.1) is proved.

Now we observe that
$$\begin{array}{c}
  (\pi_+)''(a)=\dfrac{d}{d\tau_+}\left(\dfrac{db}{da}\right)\dfrac{1}{\frac{da}{d\tau_+}}= \\
  - \dfrac{2d_+(1+\gamma_+^2)\sin^3\tau_+}{\beta_+\varphi(\tau_+,-\gamma_+)^3}(\sinh(\gamma_+\tau_+)-\gamma_+\sin\tau_+)>0. \\
\end{array}$$

Therefore substatement (a.2) follows.

From expression (\ref{parametricsolution}) it follows that

$$
 \lim_{a\rightarrow\infty}\dfrac{\pi_+(a)}{a} =\displaystyle\lim_{\tau_+\rightarrow\pi}\dfrac{b(\tau_+)}{a(\tau_+)}=
\lim_{\tau_+\rightarrow\pi}e^{2\gamma_+\tau_+}\dfrac{\varphi(\tau_+,-\gamma_+)}{\varphi(\tau_+,\gamma_+)}
=e^{\gamma_+\pi}.
$$

 On the other hand by applying the L'H\^{o}ptal's rule it is easy
to check that
$\displaystyle\lim_{a\rightarrow+\infty}(\pi_+(a)-e^{\gamma_+\pi}a)=t_+(1+e^{\gamma_+
\pi})/d_+$, which implies that the straight line
$b=e^{\gamma_+\pi}a+t_+(1+e^{\gamma_+ \pi})/d_+$ is an asymptote of
the graph of $\pi_+(a)$ and we obtain substatement (a.3).

\end{proof}

\begin{proposition}\label{prop.9}
Consider the vector field $X_+$ in $R_+$ with a real focus
equilibrium and such that $t_+<0.$ Let $\pi_+$ be the map
associated to the Poincar\'{e} map $\Pi_+:D_+^*\subset
L_+^I\rightarrow L_+^O$ defined by the flow of the linear system
$\dot{{\bf x}}=A_+{\bf x}+B_+$, where $D_+^*$ is a subset of
$L_+^I$ which the mapping $\Pi_+$ is well defined.
\begin{itemize}
\item[(a)] Then the maps $\pi_{+}$ is such that
$\pi_{+}:[a^\ast_+,\infty)\rightarrow[0,\infty),$
$\pi_+(a^\ast_+)=0$,
$\displaystyle\lim_{a\rightarrow\infty}\pi_{+}(a)=+\infty$ and
$\pi_{+}(a)<a$ in $(a^\ast_+,\infty).$
\begin{itemize}
\item[(a.1)] If $a\in(a^\ast_+,\infty)$, then
$(\pi_{+})'(a)=\dfrac{a}{\pi_+(a)}e^{2\gamma_+\tau_+}.$ Moreover
$(\pi_{+})'(a)>0$ and $\displaystyle\lim_{a\rightarrow
a^\ast_+}(\pi_+)'(a)=+\infty.$

\item[(a.2)] If $a\in(a^\ast_+,\infty)$, then $(\pi_{+})''(a)<0.$

\item[(a.3)] The straight line
$b=e^{\gamma_+\pi}a+t_+(1+e^{\gamma_+\pi})/d_+$ in the plane
$(a,b)$ is an asymptote of the graph of $\pi_+$ when $a$ tends to
$+\infty$ where $\gamma_+=t_+/\sqrt{4d_+-t_+^2}.$ So
$\displaystyle\lim_{a\rightarrow\infty}(\pi_+)'(a)=e^{\gamma_+\pi}.$
\end{itemize}

\end{itemize}\label{prop p1}
\end{proposition}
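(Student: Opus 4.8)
The plan is to follow, nearly verbatim, the proof of Proposition~\ref{prop.8}, exploiting the fact that the derivation there of the implicit system (\ref{system sol}) and of the parametric equations (\ref{parametricsolution}) for $a=a(\tau_+)$, $b=b(\tau_+)$ never used the sign of $t_+$: it only used that the equilibrium of $X_+$ is a real focus (so $\beta_+\neq 0$) placed at the origin in the Jordan coordinates. Hence (\ref{parametricsolution}) holds here unchanged, the only structural difference being that now $\alpha_+=t_+/2<0$, so that $\gamma_+=\alpha_+/\beta_+<0$. Writing $\gamma_+=-\mu$ with $\mu>0$, the two formulas in (\ref{parametricsolution}) essentially interchange the roles of $a$ and $b$ relative to the case $t_+>0$, which reflects geometrically the fact that the orbit now spirals \emph{toward} the focus.

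The first, and main, point is to pin down the range of the parameter $\tau_+$ and the number $a_+^*$. Since $A_+$ is in real Jordan form, $\tau_+=\beta_+ s$ is (up to sign) the angle swept during the flight time, and, exactly as in the case $t_+>0$, any orbit crossing $L_+$ twice through $R_+$ sweeps an angle in $(\pi,2\pi)$, with $a(\tau_+),b(\tau_+)\to+\infty$ as $\tau_+\to\pi^+$ (the factor $\varphi(\tau_+,\pm\gamma_+)/\sin\tau_+$ blows up at $\tau_+=\pi$ while $-\beta_+/d_+<0$). The difference with the unstable case is that the orbit through the contact point $p_+$ now spirals inward and does not return to $L_+$; the relevant boundary orbit is instead the one that returns to $L_+^O$ exactly at $p_+$, i.e.\ the one with $b=0$. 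By $b(\tau_+)=-\beta_+ e^{\gamma_+\tau_+}\varphi(\tau_+,-\gamma_+)/(d_+\sin\tau_+)$ this happens precisely at a zero $\tau^{**}$ of $\tau\mapsto\varphi(\tau,-\gamma_+)$ lying in $(\pi,2\pi)$: such a zero exists because $\varphi(\pi,-\gamma_+)=1+e^{-\gamma_+\pi}>0$ while $\varphi(2\pi,-\gamma_+)=1-e^{-2\gamma_+\pi}<0$ (here $-\gamma_+>0$), and it is the relevant one by the qualitative description of $\varphi(\cdot,y_o)$ with $y_o>0$ in Lemma~\ref{basicfunction}. Since $\sin\tau^{**}\neq0$ one genuinely has $b(\tau^{**})=0$, and one sets $a_+^*:=a(\tau^{**})=-\beta_+e^{-\gamma_+\tau^{**}}\varphi(\tau^{**},\gamma_+)/(d_+\sin\tau^{**})>0$ (the sign coming from $\sin\tau^{**}<0$, $\beta_+,d_+>0$, and $\varphi(\tau^{**},\gamma_+)>0$ by Lemma~\ref{basicfunction} with $y_o=\gamma_+<0$). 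From $da/d\tau_+=-\beta_+\varphi(\tau_+,-\gamma_+)/(d_+\sin^2\tau_+)<0$ on $(\pi,\tau^{**})$ it follows that $a$ decreases from $+\infty$ to $a_+^*$ there; hence $\pi_+$ is defined on $[a_+^*,\infty)$ with $\pi_+(a_+^*)=0$ and $\lim_{a\to\infty}\pi_+(a)=+\infty$, which is statement~(a) and also identifies $D_+^*$.

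With $\tau_+\in(\pi,\tau^{**})$ fixed, substatements (a.1)--(a.3) follow by re-running the computations of Proposition~\ref{prop.8}, tracking the sign changes caused by $\gamma_+<0$. Using $da/d\tau_+$ together with $db/d\tau_+=-\beta_+\varphi(\tau_+,\gamma_+)/(d_+\sin^2\tau_+)$ one gets $(\pi_+)'(a)=\varphi(\tau_+,\gamma_+)/\varphi(\tau_+,-\gamma_+)=(a/\pi_+(a))e^{2\gamma_+\tau_+}>0$, with $(\pi_+)'(a)\to+\infty$ as $a\to a_+^{*+}$ because $\varphi(\tau_+,-\gamma_+)\to0^+$ while $\varphi(\tau_+,\gamma_+)\to\varphi(\tau^{**},\gamma_+)>0$; this is (a.1). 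For (a.2), the identity from Proposition~\ref{prop.8} gives $(\pi_+)''(a)=-2d_+(1+\gamma_+^2)\sin^3\tau_+(\sinh(\gamma_+\tau_+)-\gamma_+\sin\tau_+)/(\beta_+\varphi(\tau_+,-\gamma_+)^3)$, and now, since $\gamma_+<0$ and $\tau_+>0$, one has $\sinh(\gamma_+\tau_+)-\gamma_+\sin\tau_+<0$ (equivalently $\sinh(\mu\tau_+)>\mu\tau_+>\mu\sin\tau_+$), while $\sin^3\tau_+<0$ on $(\pi,2\pi)$, so $(\pi_+)''(a)<0$; the same sign change turns $b(\tau_+)-a(\tau_+)=-2\beta_+(\sinh(\gamma_+\tau_+)-\gamma_+\sin\tau_+)/(d_+\sin\tau_+)$ into a negative quantity, giving $\pi_+(a)<a$ on $(a_+^*,\infty)$. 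Finally (a.3) is formally unchanged: $\lim_{a\to\infty}\pi_+(a)/a=\lim_{\tau_+\to\pi}e^{2\gamma_+\tau_+}\varphi(\tau_+,-\gamma_+)/\varphi(\tau_+,\gamma_+)=e^{2\gamma_+\pi}(1+e^{-\gamma_+\pi})/(1+e^{\gamma_+\pi})=e^{\gamma_+\pi}$, and L'H\^opital gives $\lim_{a\to\infty}(\pi_+(a)-e^{\gamma_+\pi}a)=t_+(1+e^{\gamma_+\pi})/d_+$ (now negative) and $\lim_{a\to\infty}(\pi_+)'(a)=e^{\gamma_+\pi}$.

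The step I expect to be the real obstacle is the one in the second paragraph: correctly describing the geometry of the contact point $p_+$ for an attracting focus, namely that orbits starting at $a<a_+^*$ on $L_+^I$ fail to return to $L_+^O$ (so that $\Pi_+$ is only defined on $D_+^*$), and that the relevant return corresponds to the zero $\tau^{**}$ of $\varphi(\cdot,-\gamma_+)$ in $(\pi,2\pi)$ rather than to the trivial zero at $0$ — this is exactly where the $y_o>0$ picture of Lemma~\ref{basicfunction} is needed. Alternatively, the whole proposition can be deduced from Proposition~\ref{prop.8} by time reversal: $Y_+:=-X_+$ has an unstable real focus with $t_{Y_+}=-t_+>0$, $d_{Y_+}=d_+$, $\gamma_{Y_+}=-\gamma_+>0$, and the Poincar\'e map of $Y_+$ through $R_+$ (from $L_+^O$ to $L_+^I$, with the parametrizations matching ours since $Y_+(p_+)=-X_+(p_+)$) is exactly $\Pi_+^{-1}$; inverting the conclusions of Proposition~\ref{prop.8} applied to $Y_+$ — an increasing convex map above the diagonal becomes an increasing concave map below it, the derivative limits $0$ and $e^{\gamma_{Y_+}\pi}$ become $+\infty$ and $e^{-\gamma_{Y_+}\pi}=e^{\gamma_+\pi}$, and the asymptote inverts — yields (a)--(a.3) verbatim.
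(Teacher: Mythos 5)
Your proof is correct and follows exactly the route the paper intends: the paper's own ``proof'' of this proposition is the one-line remark that it follows in a similar way to the proof of Proposition~\ref{prop.8}, and your argument is precisely that adaptation with the sign bookkeeping for $\gamma_+<0$ carried out correctly --- in particular the identification of $a_+^*$ via the zero $\tau^{**}$ of $\varphi(\cdot,-\gamma_+)$ in $(\pi,2\pi)$, which is the genuinely new point when the focus is attracting. The time-reversal argument you sketch at the end is a clean alternative, but your main line of reasoning coincides with the paper's.
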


%
%
%
%

\begin{proof}
The proof follows in a similar way to the proof of Proposition
\ref{prop.8}.
\end{proof}

\begin{proposition}\label{prop.10}
Consider the vector field $X_o$ in $R_o$ with a virtual focus
equilibrium. Let $\pi_o$ be the map associated to the Poincar\'{e}
map $\Pi_o: L_-^I\rightarrow L_+^I$ defined by the flow of
the linear system $\dot{{\bf x}}=A_o{\bf x}+B_o$ from the straight line $L_-$ to the straight
line $L_+$.
\begin{itemize}
\item[(a)] Then the map $\pi_o$ is such that
$\pi_o:[0,\infty)\rightarrow[a^*_o,\infty),$ $a^*_o> 0$ with
$\pi_o(0)=a^*_o$ and
$\displaystyle\lim_{d\rightarrow\infty}\pi_o(d)=+\infty.$

\item[(b)] If
$d\in[0,\infty)$, then
$\pi_o'(d)=\left(\dfrac{1-b_2}{1+b_2}\right)^2
e^{2\gamma_o\tau_o}\dfrac{d}{\pi_o(d)},$ with $\tau_o\rightarrow0$
when $d\rightarrow\infty$ and
$\displaystyle\lim_{b\rightarrow\infty}\pi_o'(d)=-\dfrac{1-b_2}{1+b2}$.
\end{itemize}
\end{proposition}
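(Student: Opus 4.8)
The plan is to follow, almost verbatim, the strategy of the proof of Proposition~\ref{prop.8}, the only structural novelty being that $\pi_o$ goes between the two \emph{distinct} (parallel) transversal lines $L_-^I$ and $L_+^I$ rather than between the two halves of a single line. First I would reduce to convenient coordinates. Since $\pi_o$ is invariant under affine changes of coordinates (Proposition 4.3.7 in \cite{Llibre1}), I translate the (virtual) equilibrium of $X_o$ — solving $A_o\mathbf{x}+B_o=\mathbf{0}$ gives the point $(-b_2,0)$, which is virtual exactly because $b_2<-1$ — to the origin, and put $A_o$ in its real Jordan normal form $\left(\begin{array}{cc}\alpha_o&-\beta_o\\\beta_o&\alpha_o\end{array}\right)$, with $\alpha_o=t_o/2=a_1/2$ and $\beta_o=\sqrt{4d_o-t_o^{2}}/2=\sqrt{4-a_1^{2}}/2$ (recall $d_o=1$ in \eqref{base system nf}). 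The key observation is that the contact points $p_-=(-1,0)$ and $p_+=(1,0)$ of $X_o$ with $L_-$ and $L_+$ are collinear with $(-b_2,0)$; hence in the reduced coordinates their images satisfy $p_+^{\ast}=\mu\,p_-^{\ast}$ with $\mu=\dfrac{b_2+1}{b_2-1}\in(0,1)$, and $\dot p_{\pm}^{\ast}=A_o p_{\pm}^{\ast}$.

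Next I would write the defining equation. With $p=p_-^{\ast}+d\,\dot p_-^{\ast}\in L_-^I$, $q=p_+^{\ast}+a\,\dot p_+^{\ast}\in L_+^I$ and $q=e^{A_o s}p$, $s\ge0$, using $\dot p_{\pm}^{\ast}=A_o p_{\pm}^{\ast}$ and $p_+^{\ast}=\mu p_-^{\ast}$ one gets
\[
\mu\,(Id+aA_o)\,p_-^{\ast}=e^{A_o s}\,(Id+dA_o)\,p_-^{\ast}.
\]
Since $p_-^{\ast}\neq\mathbf{0}$, componentwise this is a system of the same type as \eqref{system sol} (carrying the extra scalar $\mu$). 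Putting $\tau_o=\beta_o s$, $\gamma_o=\alpha_o/\beta_o$ and solving as in Proposition~\ref{prop.8} produces, for $d$ and $a=\pi_o(d)$, parametric formulas of the same shape as \eqref{parametricsolution}, schematically $d(\tau_o)=K_-\,e^{-\gamma_o\tau_o}\varphi(\tau_o,\gamma_o)/\sin\tau_o$ and $a(\tau_o)=K_+\,e^{\gamma_o\tau_o}\varphi(\tau_o,-\gamma_o)/\sin\tau_o$, where $\varphi$ is the function of Lemma~\ref{basicfunction} and $K_{\pm}$ depend on $\beta_o$, $d_o$ and $\mu$ (equivalently on the normalizations $|\dot p_{\pm}|=|b_2\mp1|$ of the two sections), with $K_-/K_+=\mu$. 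A short geometric argument then fixes the range of $\tau_o$: the virtual focus lies on the $x$-axis to the right of $R_o$, so a trajectory of $X_o$ crossing the strip from $L_-^I$ to $L_+^I$ sweeps a total angle $\tau_o$ that is largest at $d=0$ and tends to $0$ as the initial point recedes down $L_-^I$; therefore $\tau_o$ ranges over an interval $(0,\tau_o^{\ast})\subset(0,\pi)$, so $\sin\tau_o>0$ and the formulas are valid and monotone. Reading off $\tau_o\to\tau_o^{\ast}$ gives the finite positive value $\pi_o(0)=a_o^{\ast}$ (and $a_o^{\ast}>0$ because $\mu<1$), while $\tau_o\to0^{+}$ forces $d(\tau_o),a(\tau_o)\to+\infty$ through the $1/\sin\tau_o$ factor; this is statement~(a).

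For statement~(b) I would differentiate the parametric formulas with respect to $\tau_o$ — as in Proposition~\ref{prop.8}, $da/d\tau_o$ and $db/d\tau_o$ come out proportional to $\varphi(\tau_o,\mp\gamma_o)/\sin^{2}\tau_o$ — and take the quotient, obtaining
\[
\pi_o'(d)=\frac{da/d\tau_o}{dd/d\tau_o}=\Bigl(\frac{1-b_2}{1+b_2}\Bigr)^{2}e^{2\gamma_o\tau_o}\,\frac{d}{\pi_o(d)},
\]
the constant $\bigl((1-b_2)/(1+b_2)\bigr)^{2}=1/\mu^{2}$ being precisely the combined contribution of the two section normalizations and the factor $\mu$. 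Finally, $\tau_o\to0$ as $d\to\infty$, and from the parametric form (or from the asymptote of $\pi_o$, obtained by L'H\^{o}pital's rule as in Proposition~\ref{prop.8}) one has $d/\pi_o(d)\to\mu$; hence $\displaystyle\lim_{d\to\infty}\pi_o'(d)=\frac1{\mu^{2}}\cdot1\cdot\mu=\frac1{\mu}=-\frac{1-b_2}{1+b_2}$.

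I expect the only genuine difficulty to be the bookkeeping created by the two distinct contact points: getting the scalar $\mu$ to enter the defining equation in the right place, and checking that it recombines with the parametrization normalizations into the clean constant $\bigl((1-b_2)/(1+b_2)\bigr)^{2}$, together with the elementary but necessary geometric verification that the swept angle $\tau_o$ stays in $(0,\pi)$, which is what makes the parametric representation single-valued and monotone. Apart from that, the proof is a transcription of the computations already done for Proposition~\ref{prop.8} and in Lemma 4.4.10 of \cite{Llibre1}.
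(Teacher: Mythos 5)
Your proposal is correct and takes essentially the same approach as the paper's source: the paper itself gives no proof here, deferring to Proposition 3.3 of \cite{ClauWeMau}, which is precisely the parametric-equation computation you transcribe from Proposition \ref{prop.8}, with the scalar $\mu=(b_2+1)/(b_2-1)\in(0,1)$ entering through $p_+^{\ast}=\mu p_-^{\ast}$ exactly as you describe. The only cosmetic inaccuracy is your schematic form of the parametric solution: the correct expressions replace the constant $1$ in $\varphi$ by $\mu$ (e.g.\ $d(\tau_o)$ is proportional to $\mu-e^{\gamma_o\tau_o}(\cos\tau_o-\gamma_o\sin\tau_o)$ rather than to $\varphi(\tau_o,\gamma_o)$ itself), but this does not affect your derivative quotient, which does yield $\pi_o'(d)=\mu^{-2}e^{2\gamma_o\tau_o}\,d/\pi_o(d)$ and the limit $1/\mu=-(1-b_2)/(1+b_2)$ as claimed.
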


\begin{proof}
See Proposition 3.3 of \cite{ClauWeMau}.
\end{proof}

\begin{proposition} \label{prop.11} Consider the vector field $X_o$ in $R_o$ with a
virtual focus equilibrium. Let $\bar{\pi}_o$ be the map associated
to the Poincar\'{e} map $\bar{\Pi}_o:\bar{D}_o^*\subset
L_+^O\rightarrow L_-^O$ defined by the flow of the linear system
$\dot{{\bf x}}=A_o{\bf x}+B_o$ from the straight line $L_+$ to the straight
line $L_-$, where $\bar{D}_o^*$ is a subset of $L_+^O$ in which the mapping $\bar{\Pi}_o$ is well defined.
\begin{itemize}
\item[(a)] Then the map $\bar{\pi}_o$ is such that
$\bar{\pi}_o:[b^*_o,\infty)\rightarrow[0,\infty),$ $b^*_o> 0$ with
$\bar{\pi}_o(b^*_o)=0$ and
$\displaystyle\lim_{b\rightarrow\infty}\bar{\pi}_o(b)=+\infty.$

\item[(b)] If
$b\in(b^*_o,\infty)$, then
$\bar{\pi}_o'(b)=\left(\dfrac{b_2+1}{b_2-1}\right)^2
e^{2\gamma_o\bar{\tau}_o}\dfrac{b}{\bar{\pi}_o(b)},$ with
$\bar{\tau}_o\rightarrow0$ when $b\rightarrow\infty$,
$\displaystyle\lim_{b\rightarrow\infty}\bar{\pi}_o'(b)=\dfrac{b_2+1}{b_2-1}$
and $\displaystyle\lim_{b\rightarrow
b^*_o}\bar{\pi}_o'(b)=\infty$.
\end{itemize}
\end{proposition}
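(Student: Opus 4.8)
The plan is to mimic the proof of Proposition \ref{prop.8}, but now tracking the flow of $X_o$ (whose equilibrium is a virtual focus) from $L_+^O$ to $L_-^O$, and carefully account for the two distinct parametrizations (parameter $b$ on $L_+^O$, parameter $c$ on $L_-^O$) which differ by the vectors $\dot p_+ = (0,b_2+1)$ and $\dot p_- = (0,b_2-1)$; this is the source of the factor $\bigl((b_2+1)/(b_2-1)\bigr)^2$. First I would place $A_o$ in real Jordan normal form centred at the virtual focus, writing $A_o \sim \left(\begin{smallmatrix} \alpha_o & -\beta_o \\ \beta_o & \alpha_o \end{smallmatrix}\right)$ with $\alpha_o = t_o/2 = a_1/2$ and $\beta_o>0$, and express $\bar\Pi_o$ via $q = e^{A_o s} p$. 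Writing $p = p_+^* - b\,\dot p_+^*$ (point on $L_+^O$) and $q = p_-^* - c\,\dot p_-^*$ (point on $L_-^O$), and using that $\dot p_\pm^* = A_o p_\pm^* + B_o^*$ in the shifted coordinates, I would derive the analogue of \eqref{equation pi-}. Because the source and target lines $L_+$ and $L_-$ are distinct (unlike in Proposition \ref{prop.8}), the algebra is slightly heavier: one gets a linear system in $(c,s)$ whose solution, after setting $\bar\tau_o = \beta_o s$ and $\gamma_o = \alpha_o/\beta_o$, should yield parametric equations of the shape
\begin{equation}
b(\bar\tau_o) = -\frac{\beta_o e^{-\gamma_o\bar\tau_o}}{d_o\sin\bar\tau_o}\,\psi_1(\bar\tau_o,\gamma_o), \qquad
c(\bar\tau_o) = -\frac{\beta_o e^{\gamma_o\bar\tau_o}}{d_o\sin\bar\tau_o}\,\psi_2(\bar\tau_o,-\gamma_o),\notag
\end{equation}
where $\psi_1,\psi_2$ are built from the basic function $\varphi$ of Lemma \ref{basicfunction} together with linear terms carrying the ratio $(b_2\pm1)$. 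Since the focus is virtual, the relevant branch is $\bar\tau_o \in (0,\pi)$ — the solution leaves $L_+$, does not complete a half-turn around the (virtual) equilibrium, and returns to $L_-$ — and as $\bar\tau_o \to \pi^-$ both $b$ and $c$ blow up while as $\bar\tau_o\to 0^+$ we should get $c\to 0$, $b\to b_o^*>0$; this gives statement (a), the domain $[b_o^*,\infty)$, the value $\bar\pi_o(b_o^*)=0$, and $\lim_{b\to\infty}\bar\pi_o(b)=+\infty$. Positivity $b_o^*>0$ follows because a point of $L_+^O$ (upper half of $L_+$) must be reached from a point strictly interior, i.e. with $b>0$, when $c=0$; this is the same geometric fact illustrated for $b_+^*$ in Figure \ref{figg1}, transported to the $X_o$ flow.

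For part (b), differentiating the parametric equations with respect to $\bar\tau_o$ gives $db/d\bar\tau_o$ and $dc/d\bar\tau_o$ as $-\frac{\beta_o}{d_o\sin^2\bar\tau_o}$ times functions of the $\varphi$–type, so that $\bar\pi_o'(b) = (dc/d\bar\tau_o)/(db/d\bar\tau_o)$ collapses — after the algebra with the $\varphi$ identities and the exponential factors — to $\left(\frac{b_2+1}{b_2-1}\right)^2 e^{2\gamma_o\bar\tau_o}\frac{b}{\bar\pi_o(b)}$, exactly as in the $a/b\cdot e^{2\gamma\tau}$ formulas of Propositions \ref{prop.8}–\ref{prop.10}. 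The limit $\bar\tau_o\to 0$ as $b\to\infty$ is read off from the parametrization (large $b$ corresponds to $\bar\tau_o$ near the blow-up $\pi$? no — one must check which endpoint of $(0,\pi)$ gives $b\to\infty$; by the geometry of a virtual focus it is $\bar\tau_o\to 0$ that makes the flight time small yet the return points escape to infinity, consistently with Proposition \ref{prop.10}(b)). Then $\lim_{b\to\infty}\bar\pi_o'(b)$ is computed either directly from $e^{2\gamma_o\bar\tau_o}\frac{b}{\bar\pi_o(b)}\to \frac{b_2+1}{b_2-1}$ via an L'Hôpital argument on $c/b$ as $\bar\tau_o\to0$, or from an asymptote computation as in Proposition \ref{prop.8}(a.3); this yields $\frac{b_2+1}{b_2-1}$. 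Finally $\lim_{b\to b_o^*}\bar\pi_o'(b) = \infty$ follows because $\bar\pi_o(b)\to 0$ in the denominator of the derivative formula while the numerator $b\cdot e^{2\gamma_o\bar\tau_o}$ stays bounded away from $0$ — the exact analogue of the $\lim_{a\to a_+^*}(\pi_+)'(a)=+\infty$ statement in Proposition \ref{prop.9}(a.1).

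The main obstacle I expect is bookkeeping rather than conceptual: correctly carrying the two different base points $p_\pm^*$ and the two scaling vectors $\dot p_\pm^*$ through the matrix-exponential identity so that the ratio $\frac{b_2+1}{b_2-1}$ (and its square in the derivative) emerges with the right sign, and pinning down rigorously that the admissible parameter interval is $\bar\tau_o\in(0,\pi)$ with $b\to\infty \Leftrightarrow \bar\tau_o\to0^+$ and $b\to b_o^+\Leftrightarrow \bar\tau_o\to\pi^-$ — i.e. getting the orientation of the parametrization right. Once that is fixed, the monotonicity and the limits are forced by the sign information on $\varphi$ recorded in Lemma \ref{basicfunction} (Figures \ref{grafico1} and \ref{grafico2}) exactly as in the proofs of Propositions \ref{prop.8} and \ref{prop.9}, so I would simply invoke those and say ``the proof follows in a similar way''.
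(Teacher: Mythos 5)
Your overall strategy --- put $A_o$ in real Jordan form centred at the virtual equilibrium, derive parametric equations in the angular variable $\bar\tau_o=\beta_o s$, and obtain the derivative as a ratio of $\varphi$--type expressions, with the factor $\left(\frac{b_2+1}{b_2-1}\right)^2$ produced by the two different normalizing vectors $\dot p_+=(0,b_2+1)$ and $\dot p_-=(0,b_2-1)$ --- is the right one. The paper itself gives no argument here; it defers to Proposition 3.3 of \cite{ClauWeMau}, whose proof has exactly this structure (parallel to the proof of Proposition \ref{prop.8} reproduced in this paper), so your reconstruction is faithful in spirit.

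The one genuine error is in the orientation and range of $\bar\tau_o$, which you yourself flag as the delicate point and then resolve inconsistently: your first paragraph assigns $\bar\tau_o\to 0^+$ to $b\to b_o^*$, the middle of your argument correctly switches to $b\to\infty\Leftrightarrow\bar\tau_o\to 0^+$, and your closing summary asserts $b\to b_o^*\Leftrightarrow\bar\tau_o\to\pi^-$. That last claim is false. The equilibrium of $X_o$ sits at $(-b_2,0)$ with $-b_2>1$, strictly to the right of both switching lines; every orbit arc realizing $\bar\Pi_o$ stays in $\{-1\le x\le 1,\ y\ge 0\}$, whose points all lie in the relative second quadrant (or on the relative negative $x$--axis) with respect to that equilibrium, so a starting point on $L_+^O$ and the contact point $p_-=(-1,0)$ are never antipodal and the extreme arc subtends an angle $\bar\tau^*$ strictly less than $\pi$. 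The correct picture is $\bar\tau_o\in(0,\bar\tau^*]$, with $\bar\tau_o=\bar\tau^*<\pi$ at $b=b_o^*$ (where $c=0$) and $\bar\tau_o\to 0^+$ as $b\to\infty$: both endpoints escape to infinity along the vertical lines, so the subtended angle shrinks, and conversely a fixed horizontal displacement of $2$ with vanishing flight time forces $b,c\to\infty$ simultaneously. Fortunately none of the stated conclusions depend on the value of $\bar\tau^*$: statement (a) and the limits as $b\to\infty$ use only the $\bar\tau_o\to 0^+$ end, and $\lim_{b\to b_o^*}\bar\pi_o'(b)=\infty$ follows from the derivative formula because $\bar\pi_o(b)\to 0$ while $b\,e^{2\gamma_o\bar\tau_o}$ stays bounded away from $0$ ($\bar\tau_o$ being bounded), exactly as you argue. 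So the plan goes through once the orientation is fixed; just make sure the final write-up states the admissible interval of $\bar\tau_o$ correctly rather than as $(0,\pi)$ with blow-up at $\pi^-$.
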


\begin{proof}
See Proposition 3.3 of \cite{ClauWeMau}.
\end{proof}

\section{Limit Cycles when  $b_2<-1$ and $X_+$ Has a Real Focus}
\label{sec Lim b_2< -1}

In this section we will assume that $b_2\leq-1$ and $X_+$ has a real focus  and
so, by Lemma \ref{equilibrium}\;{(i)-(ii)} and hypothesis (H1) and
(H2), $X_-$ has a virtual center and $X_o$ has either a virtual focus  (when $b_2<-1$) or a real focus at $(1,0)$ (when $b_2=-1$). As the previuous sections, we denote by 	$\gamma_i=\dfrac{\alpha_i}{\beta_i},\,i\in\{-,o,+\}.$ Thus
$\gamma_-=0$ and $\gamma_+\gamma_o<0.$ 

By proof of Proposition 12 from \cite{ClauWeMau}, if $b_2=-1$ and $\gamma_++\gamma_o=0$ the foci of $X_+$ and $X_o$ give rise to a center at the point $(1,0)$. Moreover, we have a bounded period annulus which the border is the equilibriun point $(1,0)$ and the periodic orbit tangent to $L_-$ at point $(-1,0)$, see Figure \ref{figura1a}.

\begin{figure}[h!]
	\begin{center}
		\begin{overpic}[width=5.5cm,height=3.5cm]{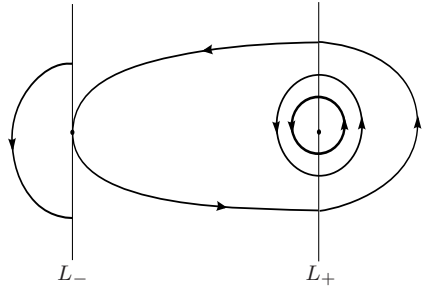}
				\put(12,-4){\tiny$L_-$} \put(72,-4){\tiny $L_+$}
		\end{overpic}
	\end{center}
	\caption{Period annulus which the border  is the equilibrium point $(1,0)$ and the orbit by the point $(-1,0)$. } \label{figura1a}
\end{figure}

Note that for $b_2<-1$  with $|b_2+1|$ small enough, the focus of $X_+$ is real, $a_+^*<a_o^*$ and $a_+^*<b_o^*$,  where $a_o^*=\pi_o(0)$, $b_o^*=\bar\pi_o^{-1}(0)$ and $a_+^*=\pi_+(0)$. Hence, the orbits of the periodic annuls are broken and give us the four possible phase portraits described in the Figures \ref{figura8ab} and \ref{figura9ab}, with $a_+^o=\pi_+^{-1}(b_o^*)$.

\begin{figure}[!htb]
	\begin{minipage}[b]{0.45\linewidth}
		\centering
		\begin{overpic}[width=5.5cm,height=3.5cm]{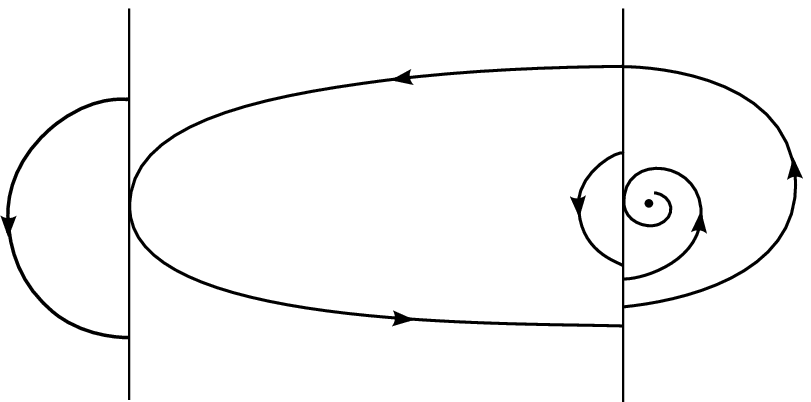}
\put(17,30){\tiny $0$} 
\put(78,12){\tiny $a_o^*$} 
\put(71,15){\tiny $a_o^+$} 
\put(69,21){\tiny $a_+^*$} 
\put(78,56){\tiny $b_o^*$}
\put(13,-3){\tiny $L_-$} 
\put(74,-3){\tiny $L_+$}
\put(38,-15){ Case $(a)$}
	\end{overpic}
	\end{minipage}
	\hspace{0.5cm}
	\begin{minipage}[b]{0.45\linewidth}
		\centering
	\begin{overpic}[width=5.5cm,height=3.5cm]{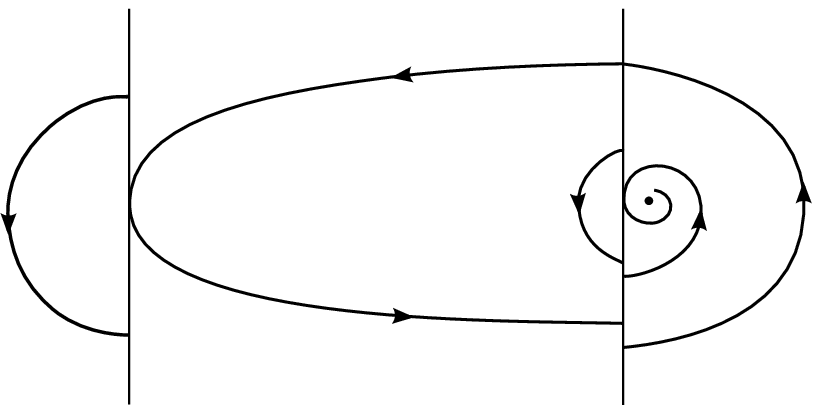}
\put(16.5,31){\tiny $0$} 
\put(77.5,13){\tiny $a_o^*$} 
\put(70,7){\tiny $a_o^+$} 
\put(70,20){\tiny $a_+^*$} 
\put(77,56){\tiny $b_o^*$}
\put(13,-3){\tiny $L_-$} 
\put(74,-3){\tiny $L_+$}
\put(38,-15){ Case $(b)$}
	\end{overpic}
	\end{minipage}
	\vspace{1cm}
		\caption{Phase portraits when the periodic annulus is broken and $\gamma_+<0$.} \label{figura8ab}
\end{figure}

\begin{figure}[!htb]
	\begin{minipage}[b]{0.45\linewidth}
		\centering
		\begin{overpic}[width=5.5cm,height=3.5cm]{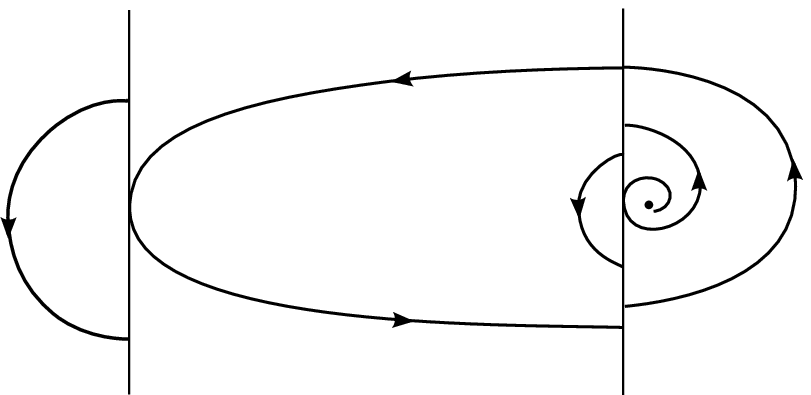}
\put(16.5,30){\tiny $0$} 
\put(78,10.5){\tiny $a_o^*$} 
\put(71.5,14.5){\tiny $a_o^+$} 
\put(71.5,43.5){\tiny $a_+^*$} 
\put(78,56){\tiny $b_o^*$}
\put(13,-3){\tiny $L_-$} 
\put(74,-3){\tiny $L_+$}
\put(38,-15){ Case $(a)$}
		\end{overpic}
	\end{minipage}
	\hspace{0.5cm}
	\begin{minipage}[b]{0.45\linewidth}
		\centering
		\begin{overpic}[width=5.5cm,height=3.5cm]{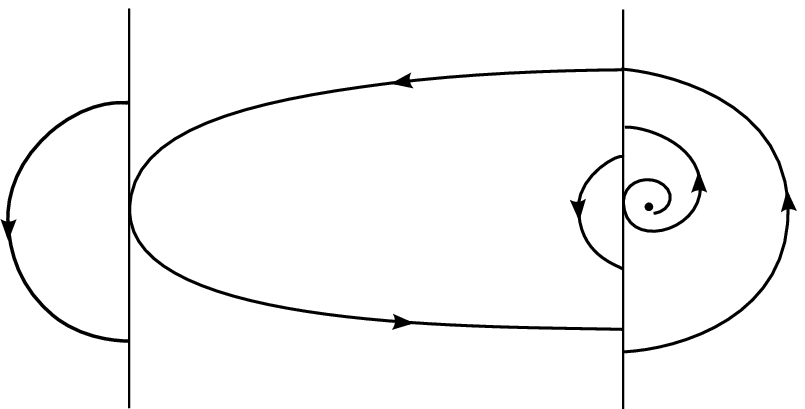}
\put(16.5,31){\tiny $0$} 
\put(78.5,12.5){\tiny $a_o^*$} 
\put(71,7){\tiny $a_o^+$} 
\put(72,44){\tiny $a_+^*$} 
\put(79,56){\tiny $b_o^*$}
\put(13,-3){\tiny $L_-$} 
\put(75,-3){\tiny $L_+$}
\put(38,-15){ Case $(b)$}
		\end{overpic}
	\end{minipage}
	\vspace{1cm}
	\caption{Phase portraits when the periodic annulus is broken and $\gamma_+>0$.} \label{figura9ab}
\end{figure}

We will study the periodic orbits of period annulus that persist by small perturbations of the parameters of system {\rm (\ref{base system nf})}. More precisely, we are interested in the limit cycles that bifurcate from the period annulus. For this, we will consider the Poincar\'e maps defined in two and three zones  respectively.  

Firstly, we begin studying the sign of the displacement function 
\[
a_o^*-a_o^+.
\]
Note that, for $b=-1$ and $\gamma_++\gamma_o=0$, $a_o^*-a_o^+=0$. 

\begin{lemma} Let $\varphi_o(s, q)=(\varphi^1_o(s, q), \varphi^2_o(s, q))$ be the flow of $X_o$ by the point $q$. Then $\Pi_o(0) =  \varphi^2_o(s_o, (-1, 0))$ and $\bar{\Pi}_o^{-1}(0) =  \varphi^2_o(-\bar{s}_o, (-1, 0))$, where $s_o$ and $\bar{s}_o$  satisfy $\varphi^1_o( s_o, (-1, 0))=1$ and $\varphi^1_o(-\bar{s}_o, (-1, 0))=1$, respectively. Moreover $s_o$ and $\bar{s}_o$ are functions of $b_2$ and satisfy
\begin{eqnarray}
\Pi_o(0) & = & -2 e^{\gamma_o\tau_o}+ \left( -2\alpha_o+e^{\gamma_o\tau_o} \right) 
 \left(b_2+1\right) +\dfrac{1}{4} e^{-\gamma_o\tau_o } \left( b_2
+1 \right) ^{2}  \label{eq:9} \\ &+ & \mathcal{O}\left((b2+1)^3\right),\nonumber \\
\bar{\Pi}_o^{-1}(0) & = & 2 e^{-\gamma_o\bar{\tau}_o}- \left( 2\alpha_o+e^{-\gamma_o\bar{\tau}_o} \right) 
 \left(b_2+1\right) -\dfrac{1}{4} e^{\gamma_o\bar{\tau}_o } \left( b_2
+1 \right) ^{2}  \label{eq:8}\\ &+ & \mathcal{O}\left((b2+1)^3\right),\nonumber
 \end{eqnarray}
 where $\beta_os_o(-1)=\tau_o=\arctan \left(\dfrac{\beta_o}{\alpha_o}\right)$ and $\beta_o\bar{s}_o(-1)=\bar{\tau}_o=\arctan \left(-\dfrac{\beta_o}{\alpha_o}\right)$, i.e.  for $b_2=-1$, $\varphi^1_o( \tau_o/\beta_o, (-1, 0))=1$ and $\varphi^1_o(-\bar{\tau}_o/\beta_o, (-1, 0))=1$, respectively.
\end{lemma}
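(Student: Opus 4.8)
The plan is to compute the flow $\varphi_o(s,q)$ of the central vector field $X_o$ explicitly and then extract the claimed quantities. First I would put the linear system $\dot{\mathbf x}=A_o\mathbf x+B_o$ in a form amenable to integration: since, by Lemma~\ref{equilibrium}(ii), when $b_2=-1$ the equilibrium of $X_o$ sits at $(1,0)$, and more generally the equilibrium is $\mathbf x_o^*=-A_o^{-1}B_o$, I would translate coordinates to the equilibrium and pass to the real Jordan form of $A_o$, where $A_o$ becomes $\begin{pmatrix}\alpha_o & -\beta_o\\ \beta_o & \alpha_o\end{pmatrix}$ (recall $t_o=2\alpha_o$, $d_o=\alpha_o^2+\beta_o^2$). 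In those coordinates $e^{A_o s}$ is the usual scaled rotation $e^{\alpha_o s}R(\beta_o s)$, so $\varphi_o(s,q)$ is elementary. The point $(-1,0)$ is the contact point $p_-$ of $X_-$ with $L_-$; tracking it through the coordinate change gives its image, and then $\varphi_o^1(s,(-1,0))=1$ is the equation that determines the flight time $s_o$ (forward) or $-\bar s_o$ (backward) at which the orbit through $p_-$ meets $L_+=\{x=1\}$. The identities $\Pi_o(0)=\varphi_o^2(s_o,(-1,0))$ and $\bar\Pi_o^{-1}(0)=\varphi_o^2(-\bar s_o,(-1,0))$ are then immediate from the definitions of $\Pi_o$ and $\bar\Pi_o$ together with the parametrizations of $L_\pm^I,L_\pm^O$ introduced in Section~\ref{sec Poin Map}: $\Pi_o$ starts at $d=0$, i.e.\ at $p_-=(-1,0)\in L_-^I$, flows under $X_o$, and lands on $L_+^I$, whose $y$-coordinate is exactly $\varphi_o^2(s_o,(-1,0))$; similarly $\bar\Pi_o$ runs from $L_+^O$ to $L_-^O$, so $\bar\Pi_o^{-1}(0)$ corresponds to the backward orbit from $p_-$ hitting $L_+^O$.

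Next I would verify the stated value of the flight time at $b_2=-1$. When $b_2=-1$ the equilibrium is at $(1,0)$, so in equilibrium-centered coordinates $p_-=(-1,0)$ becomes $(-2,0)$ (after the Jordan change, a vector of length $2$ along the appropriate axis), and $L_+=\{x=1\}$ passes through the equilibrium. The condition ``the orbit through $(-2,0)$ first meets the line through the equilibrium'' is a purely angular condition on $R(\beta_o s)$, which gives $\beta_o s_o(-1)=\tau_o=\arctan(\beta_o/\alpha_o)$ and, integrating backward, $\beta_o\bar s_o(-1)=\bar\tau_o=\arctan(-\beta_o/\alpha_o)$, matching the statement. (The precise geometry of which axis $p_-$ lands on, and hence the sign inside the $\arctan$, is dictated by the orientation conventions fixed by Lemma~\ref{nf}; I would just carry those through.) Plugging $\beta_o s_o=\tau_o$ into $\varphi_o^2$ and translating back to the original $y$-coordinate gives the leading terms $\Pi_o(0)=-2e^{\gamma_o\tau_o}$ and $\bar\Pi_o^{-1}(0)=2e^{-\gamma_o\bar\tau_o}$ displayed in \eqref{eq:9}--\eqref{eq:8}; this is also consistent with the fact that at $b_2=-1$, $\gamma_o+\gamma_+=0$ the annulus closes up, i.e.\ $a_o^*-a_o^+=0$, as noted just before the lemma.

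Finally, the quadratic expansions in \eqref{eq:9}--\eqref{eq:8} are obtained by treating $s_o=s_o(b_2)$ and $\bar s_o=\bar s_o(b_2)$ as implicitly defined functions of $\varepsilon:=b_2+1$ through $\varphi_o^1(s_o,(-1,0))=1$. Here I must keep track of \emph{two} sources of $\varepsilon$-dependence: the equilibrium position $\mathbf x_o^*$ moves with $b_2$ (from Lemma~\ref{equilibrium}, as $b_2$ decreases past $-1$ the equilibrium leaves $R_o$), and consequently both the starting point $(-1,0)$ relative to the equilibrium and the target line $L_+$ relative to the equilibrium shift. I would Taylor-expand $\varphi_o^1$ in $(s-s_o(-1))$ and in $\varepsilon$, solve the implicit equation for $s_o(\varepsilon)=\tau_o/\beta_o+s_1\varepsilon+s_2\varepsilon^2+O(\varepsilon^3)$, substitute into $\varphi_o^2$, and collect powers of $\varepsilon$ to read off the coefficients $-2\alpha_o+e^{\gamma_o\tau_o}$ at first order and $\tfrac14 e^{-\gamma_o\tau_o}$ at second order (and the mirror-image coefficients for $\bar\Pi_o^{-1}(0)$, with $\tau_o\mapsto\bar\tau_o$ and the appropriate sign flips coming from backward integration). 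The main obstacle is purely bookkeeping: organizing this double expansion so that the equilibrium-drift contributions and the flight-time-drift contributions are combined correctly, and matching the resulting constants to the clean closed forms in the statement; there is no conceptual difficulty, but the sign conventions inherited from Lemma~\ref{nf} and from the parametrizations of the four transversal sections have to be handled with care to land exactly on \eqref{eq:9} and \eqref{eq:8}.
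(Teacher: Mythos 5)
Your proposal is correct and follows essentially the same route as the paper: integrate the linear flow of $X_o$ explicitly, determine the flight time implicitly from $\varphi_o^1(s,(-1,0))=1$ (which at $b_2=-1$ reduces to the angular condition $\cos(\beta_o s)-\gamma_o\sin(\beta_o s)=0$, giving $\tau_o=\arctan(\beta_o/\alpha_o)$, with $\alpha_o^2+\beta_o^2=\det A_o=1$ used to simplify the coefficients), and then Taylor-expand in $b_2+1$. The only difference is that the paper sidesteps the equilibrium-drift bookkeeping you flag by writing the flow directly in the fixed normal-form coordinates of Lemma~\ref{nf}, so that all the $b_2$-dependence sits in the single implicit equation $(b_2-1)e^{\alpha_o s}\left[\cos(\beta_o s)-\gamma_o\sin(\beta_o s)\right]=b_2+1$, which is differentiated twice at $b_2=-1$ to obtain $s_o'(-1)=\tfrac12 e^{-\gamma_o\tau_o}$ and $s_o''(-1)$ before substituting into $\varphi_o^2$.
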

\begin{proof}
The first statement follows directly from the definitions of $\Pi_o(0)$ and $\bar{\Pi}_o^{-1}(0)$.

We have that
\begin{equation}
\label{eq:10}
\begin{aligned}
\varphi_o^1 (s, (-1,0))  & =   (b_2-1) e^{\alpha_o s}\cos (\beta_o s)-\gamma_o(b_2-1) e^{\alpha_o s} \sin(\beta_o s)-b_2,  \\
\varphi_o^2 (s, (-1, 0)) &  =  (b2-1)\dfrac{e^{\alpha_o s}}{\beta_o}\sin (\beta_o s). 
\end{aligned}
\end{equation}
The equation $\varphi^1_o(s, (-1, 0))=1$ can be written as
\begin{equation}
\label{eq:11}
(b_2-1)e^{\alpha_o s}\left[\cos (\beta_o s)-\gamma_o \sin (\beta_o s)\right]=b_2+1.
\end{equation}
Hence, for $b_2=-1$, this equation is equivalent to
\[
\cos (\beta_o s)-\gamma_o \sin (\beta_o s)=0,
\]
whose solution $s_o(-1)$ is
\[
s_o(-1)=\dfrac{1}{\beta_o}\arctan (\gamma_o^{-1})=\dfrac{\tau_o}{\beta_o},
\]
i.e. $\tau_o=\arctan \left(\dfrac{\beta_o}{\alpha_o}\right)$. Note that, if $\alpha_o>0$, then $0<\tau_o<\dfrac{\pi}{2}$. Now, if $\alpha_o<0$, then $\dfrac{\pi}{2}<\tau_o<\pi$. Moreover it follows from equation \eqref{base system nf} that $\det A_o=1.$ This implies that $\alpha_o^2+\beta_o^2=1.$ So we have $\cos\tau_o=\alpha_o$ and $\sin\tau_o=\beta_o$. 

Computing the derivative with respect to $b_2$ in both sides of equation \eqref{eq:11}, treating $s$ as a function of $b_2$ and after substituting $b_2=-1$, we have
\[
s_o'(-1)=\dfrac{e^{-\alpha_os_o(-1)}}{2\beta_o(\sin(\tau_o)+\gamma_o\cos(\tau_o))}=\frac{1}{2}e^{-\gamma_o\tau_o}.
\]
Analogously, we obtain 
\[
s_o''(-1)=\dfrac{1}{2}\left(1-\alpha_o e^{-\gamma_o\tau_o}\right)e^{-\gamma_o\tau_o}.
\]
Thus, the second order expansion of the function $s_o(b_2)$, solution of the equation $\varphi^1_o(s, (-1, 0))=1$, in a neighborhood of $b_2=-1$ is
\[
s_o(b_2)=\dfrac{\tau_o}{\beta_o}+\frac{1}{2}e^{-\gamma_o\tau_o}(b_2+1)+\dfrac{1}{4}\left(1-\alpha_o e^{-\gamma_o\tau_o}\right)e^{-\gamma_o\tau_o}(b_2+1)^2+\mathcal{O} ((b_2+1)^3).
\]
Now, substituting the above expression in $\varphi_o^2 (s, (-1, 0)) $ and expanding it as a function of $b_2$ in a neighborhood of $b_2=-1$, we obtain the second order expansion of $\Pi_o(0)$, i.e.
\[
\Pi_o(0)  =  -2 e^{\gamma_o\tau_o}+ \left( -2\alpha_o+e^{\gamma_o\tau_o} \right) 
 \left(b_2+1\right) +\dfrac{1}{4} e^{-\gamma_o\tau_o } \left( b_2
+1 \right) ^{2} +\mathcal{O} \left((b2+1)^3\right).
\]

In a similar way, we can determine the second order expansion of the function $\bar{s}_o(b_2)$, solution of the equation $\varphi^1_o(-s, (-1, 0))=1$, in a neighborhood of $b_2=-1$. Just changing in \eqref{eq:10} $s$ by $-s$ and noting that in this case the equation  $\varphi^1_o(-s, (-1, 0))=1$, for $b_2=-1$, is equivalent to 
\[
\cos (\beta_o s)+\gamma_o \sin (\beta_o s)=0,
\]
whose solution $\bar{s}_o(-1)$ is
\[
\bar{s}_o(-1)=\dfrac{1}{\beta_o}\arctan (-\gamma_o^{-1})=\dfrac{\bar{\tau}_o}{\beta_o},
\]
i.e. $\bar{\tau}_o=\arctan \left(-\dfrac{\beta_o}{\alpha_o}\right)$. Moreover, if $\alpha_o>0$, then $\dfrac{\pi}{2}<\tau_o<\pi$ and if $\alpha_o<0$, then $0<\tau_o<\dfrac{\pi}{2}$. In both cases, as $\alpha_o^2+\beta_o^2=1$, we have $\cos\tau_o=-\alpha_o$ and $\sin\tau_o=\beta_o$.  Hence, we obtain
\[
\bar{s}_o(b_2)=\dfrac{\bar{\tau}_o}{\beta_o}+\frac{1}{2}e^{\gamma_o\bar{\tau}_o}(b_2+1)+\dfrac{1}{4}\left(1+\alpha_o e^{\gamma_o\bar{\tau}_o}\right)e^{\gamma_o\bar{\tau}_o}(b_2+1)^2+\mathcal{O}((b_2+1)^3),
\]
and so
\[
\bar{\Pi}_o^{-1}(0) =  2 e^{-\gamma_o\bar{\tau}_o}- \left( 2\alpha_o+e^{-\gamma_o\bar{\tau}_o} \right) 
 \left(b_2+1\right) -\dfrac{1}{4} e^{\gamma_o\bar{\tau}_o } \left( b_2
+1 \right) ^{2} +\mathcal{O}\left((b2+1)^3\right).
\]
\end{proof}

In what follows, we denote by $d_+^*$,  $\beta_+^*$ and $\gamma^*_+$ the restrictions  of $d_+$,  $\beta_+$ and $\gamma_+$ to $b_2=-1$, respectively.  

\begin{lemma} Let $\varphi_+(s, q)=(\varphi^1_+(s, q), \varphi^2_+(s, q))$ be the flow of $X_+$ by the point $q=(1, \bar{\Pi}_o^{-1}(0))$, where $\bar{\Pi}_o^{-1}(0)$ is given by \eqref{eq:8}. Then $\Pi_+^{-1}\circ \bar{\Pi}_o^{-1}(0) =\varphi^2_+(-s_+, (1, $ $\bar{\Pi}_o^{-1}(0)))$, where $s_+$ satisfy $\varphi^1_+(- s_+, (1, \bar{\Pi}_o^{-1}(0)))=1$. Moreover $s_+$ is a function of $b_2$ and
\begin{equation}
\label{eq:13}
\begin{aligned}
\Pi_+^{-1}\circ \bar{\Pi}_o^{-1}(0) = & -2 e^{-(\gamma_o\bar{\tau}_o+\gamma^*_+\pi)} \\ &+\left(e^{-(\gamma_o\bar{\tau}_o+\gamma_+^*\pi)}\left[2e^{\gamma_o\bar{\tau}_o}\left(\alpha_o-\dfrac{\alpha_+}{d_+^*}\right)+\left(1-\dfrac{\pi\gamma_+^*}{(\beta_+^*)^2}\right)\right]\right. \\ & \left.-\dfrac{2\alpha_+}{d_+^*}\right)(b_2+1) \\
& +\frac{e^{-(\gamma_o\bar{\tau}_o+\gamma^*_+\pi)} }{4( d_+^*)^2 (\beta_+^*) ^6}\left[d_+^* (\beta_+^*) ^6
   e^{2 \gamma_o\bar{\tau}_o} \left(d_+^*+e^{2\gamma_+^* \pi}-1\right) \right. \\ & \left.+4 \alpha_+  (\beta_+^*)^3 e^{\gamma_o\bar{\tau}_o} \left(\pi  d_+^* (\alpha_o
   d_+^*-\alpha_+ )\right.+2 (\beta_+^*)^3 \left(e^{\gamma_+^*\pi  \alpha}+1\right)\right)\\
   &+\left. \pi  (d_+^*)^2 \alpha_+  \left(2 (\beta_+^*)^3-\pi  \alpha_+ +3 \beta_+^* \right)\right] (b_2+1)^2\\
& +\mathcal{O} ((b_2+1)^3).
\end{aligned}
\end{equation}
\end{lemma}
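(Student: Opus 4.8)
The plan is to treat the two assertions separately: the first is obtained by unwinding the definition of $\Pi_+$, and the second is an implicit‑function‑theorem computation entirely parallel to the proof of the preceding lemma. For the first assertion, recall that $\Pi_+$ (Propositions \ref{prop.8} and \ref{prop.9}) carries a point of $L_+^I$ to the first point of $L_+^O$ met along the orbit of $X_+$ in forward time; hence $\Pi_+^{-1}$ applied to $q=(1,\bar\Pi_o^{-1}(0))\in L_+^O$ follows the orbit of $X_+$ through $q$ in backward time until it first meets $L_+=\{x=1\}$ again. Letting $s_+>0$ be that first backward time, so that $\varphi^1_+(-s_+,q)=1$, the meeting point is $(1,\varphi^2_+(-s_+,q))\in L_+^I$, and therefore $\Pi_+^{-1}\circ\bar\Pi_o^{-1}(0)=\varphi^2_+(-s_+,q)$. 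Since for $b_2$ near $-1$ this crossing occurs away from the contact point $p_+=(1,0)$ — its second coordinate being close to $-2e^{-(\gamma_o\bar\tau_o+\gamma_+^*\pi)}<0$ — transversality of $X_+$ to $L_+$ and the implicit function theorem show that $s_+$ is well defined and analytic in $b_2$ near $b_2=-1$, the dependence entering both through the entries of $A_+$ and through the point $q$.

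For the expansion I would first make the flow of $X_+$ explicit. Since $\det A_+=d_+=d_+^*+(b_2+1)$ while $t_+$ is independent of $b_2$, the eigenvalues of $A_+$ are $\alpha_+\pm i\beta_+$ with $\alpha_+=t_+/2$ constant in $b_2$ and $\beta_+^2=d_+-\alpha_+^2=(\beta_+^*)^2+(b_2+1)$; hence $\beta_+$ and $\gamma_+=\alpha_+/\beta_+$ are analytic near $b_2=-1$, with $\frac{d}{db_2}\beta_+^2=1$ and $\frac{d}{db_2}\gamma_+=-\gamma_+^*/(2(\beta_+^*)^2)$ at $b_2=-1$. Writing $\mathbf{x}_+^*=-A_+^{-1}B_+$ for the real equilibrium of $X_+$, the flow reads
\[
\varphi_+(s,q)=\mathbf{x}_+^*+e^{\alpha_+ s}\Big(\cos(\beta_+ s)\,I+\frac{\sin(\beta_+ s)}{\beta_+}(A_+-\alpha_+ I)\Big)(q-\mathbf{x}_+^*),
\]
which renders $\varphi^1_+(-s,q)$ and $\varphi^2_+(-s,q)$ elementary functions of $s$ and of $b_2$, the latter appearing through $\mathbf{x}_+^*$, $\alpha_+$, $\beta_+$ and through the second coordinate of $q=(1,\bar\Pi_o^{-1}(0))$, whose $b_2$‑expansion is \eqref{eq:8}.

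Next I evaluate at $b_2=-1$. Then $\mathbf{x}_+^*=(1,0)$ by Lemma \ref{equilibrium}(ii) and $\bar\Pi_o^{-1}(0)=2e^{-\gamma_o\bar\tau_o}$ by \eqref{eq:8}, so $q-\mathbf{x}_+^*$ is the vertical vector $(0,2e^{-\gamma_o\bar\tau_o})$; since the line $x=1$ then passes through the equilibrium $(1,0)$, the orbit of $X_+$ through $q$ returns to $L_+$ in backward time after sweeping the angle $\pi$, which gives $s_+(-1)=\pi/\beta_+^*$ and $\varphi^2_+(-s_+(-1),q)=-2e^{-(\gamma_o\bar\tau_o+\gamma_+^*\pi)}$, the leading term of \eqref{eq:13}. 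To pass to a neighbourhood of $b_2=-1$, I would recover $s_+(b_2)$ from the implicit relation $\varphi^1_+(-s_+(b_2),(1,\bar\Pi_o^{-1}(0)))=1$: differentiating it once and twice in $b_2$ at $b_2=-1$, and inserting \eqref{eq:8} together with the derivatives of $\beta_+$ and $\gamma_+$ recorded above, yields $s_+'(-1)$ and $s_+''(-1)$ in closed form. Substituting the resulting second‑order Taylor polynomial of $s_+(b_2)$ into $\varphi^2_+(-s,(1,\bar\Pi_o^{-1}(0)))$ and expanding in powers of $(b_2+1)$ up to order two then produces \eqref{eq:13}.

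The only genuine difficulty here is bookkeeping rather than structure: the coefficient of $(b_2+1)^2$ in \eqref{eq:13} mixes three independent sources of $b_2$‑dependence — the second‑order part of \eqref{eq:8} for $\bar\Pi_o^{-1}(0)$, the expansions of $\beta_+$, $\gamma_+$ and $d_+$ forced by $d_+=d_+^*+(b_2+1)$, and the implicit expansion of the flight time $s_+(b_2)$ — so the algebra leading to the displayed coefficient is lengthy and is most safely carried out, and checked, with a computer algebra system; all the structural steps, however, are routine and run parallel to the proof of the preceding lemma.
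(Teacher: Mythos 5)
Your proposal is correct and follows essentially the same route as the paper: identify $s_+(-1)=\pi/\beta_+^*$ from the geometry at $b_2=-1$, expand the flight time $s_+(b_2)$ to second order from the implicit equation $\varphi_+^1(-s_+,(1,\bar\Pi_o^{-1}(0)))=1$ using the expansion \eqref{eq:8}, and substitute into $\varphi_+^2$ to obtain \eqref{eq:13}. The only (harmless) differences are presentational — you justify the leading term via the antipodal-map argument and obtain $s_+'(-1),s_+''(-1)$ by implicit differentiation, whereas the paper writes the flow components explicitly and matches coefficients of an ansatz for $s_+(b_2)$.
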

\begin{proof}
The first statement follow directly from the definition of $\Pi_+^{-1}(0)$.

We have that
\[
\begin{aligned}
\varphi_+^1 (-s_+, (1, \bar{\Pi}_o^{-1}(0)))   = &  \dfrac{(1+b_2)}{d_+} e^{-\alpha_+ s}\cos (\beta_+ s)\\ &+\dfrac{d_+ \bar{\Pi}_o^{-1}(0)+(1+b_2)\alpha_+}{\beta_+d_+} e^{-\alpha_+ s} \sin(\beta_+ s)\\ &+\dfrac{d_+-(1+b_2)}{d_+},  \\
\varphi_+^2 (-s_+, (1, \bar{\Pi}_o^{-1}(0)))  = &  \dfrac{d_+ \bar{\Pi}_o^{-1}(0)+c_{11}(1+b_2)}{d_+} e^{-\alpha_+ s}\cos (\beta_+ s) \\ &-\left[\dfrac{2(1+b_2)(d_+-c_{11}t_+)}{2\beta_+d_+}\right. \\ & \left.+\dfrac{ d_+(a_1-c_{11})\bar{\Pi}_o^{-1}(0)}{2\beta_+d_+}\right] e^{-\alpha_+ s} \sin(\beta_+ s)\\ &-\dfrac{c_{11}(1+b_2)}{d_+}. 
\end{aligned}
\]
Note that, for $b_2=-1$, $s_+(-1)=\dfrac{\pi}{\beta_+^*}$ is the solution of the equation $\varphi_+^1 (-s_+, (1, $ $\bar{\Pi}_o^{-1}(0)))=1$. Thus, if $s_+(b_2)=\dfrac{\pi}{\beta_+^*}+s_{+_1}(b_2+1)+s_{+_2}(b_2+1)^2+\cdots$ is the solution of this equation, substituting $\bar{\Pi}_o^{-1}(0)$ by its expansion \eqref{eq:8} and expanding $\varphi_+^1 (-s_+, (1, \bar{\Pi}_o^{-1}(0)))$ as a power series in $(b_2+1)$ in a neighborhood of $b_2=-1$, we obtain 
\[
\begin{aligned}
s_+(b_2)= & \dfrac{\pi}{\beta_+^*}-\left( \dfrac{1+e^{-\gamma_+^*\pi}}{2d_+^*}\right)e^{\gamma_o\bar{\tau}_o+\gamma_+^*\pi}(b_2+1)  \\ &+
e^{\gamma_o\bar{\tau}_o} \left(\left(\dfrac{e^{\gamma_+^*\pi}+1}{2(d_+^*)^2}\right) \left(\frac{1}{2} \alpha_+  e^{\gamma_o\bar{\tau}_o}
   \left(e^{\gamma_+^*\pi}+1\right)+1\right)\right. \\ & +\left.\dfrac{e^{\gamma_+^*\pi} }{4 d_+^*}\left(-2 \alpha_o e^{\gamma_o\bar{\tau}_o}+\frac{\pi  \alpha_+ }{(\beta_+^*) ^3}-1\right)-\dfrac{2 \alpha_o e^{\gamma_o\bar{\tau}_o}+1}{4 d_+^*}\right)(b_2+1)^{2}+ \cdots.
\end{aligned}
\]
Hence, substituting the above expression in $\varphi_+^2(-s_+(b_2),(1,\bar{\Pi}_o^{-1}(0)))$ and expanding it as a power series in $(b_2+1)$ at the neighborhood of $b_2=-1$, we obtain the second order expansion of  $\Pi_+^{-1}\circ \bar{\Pi}_o^{-1}(0)$ in $(b_2+1)$ at the neighborhood of $b_2=-1$, i.e. we have the equality \eqref{eq:13} .
\end{proof}

\begin{lemma}
Let  $a_o^*=\pi_o(0)$ and $a_+^o=\pi_+^{-1}(b_o^*)$. Then the diference $a_o^*-a_o^+$ is equivalent to $(\Pi_o - \Pi_+^{-1}\circ \bar{\Pi}_o^{-1})(0)$ and 
\begin{equation}
\label{eq:14}
\begin{aligned}
(\Pi_o - \Pi_+^{-1}\circ \bar{\Pi}_o^{-1})(0)= & 2e^{-(\gamma_o\bar{\tau}_o+\gamma_+^*\pi)}\left[1-e^{(\gamma_o+\gamma_+^*)\pi}\right] \\ & +e^{-(\gamma_o+\gamma_+^*)\pi}\left[
e^{\gamma_o\tau_o}\left(\dfrac{\pi t_+}{2(\beta_+^*)^3}+e^{(\gamma_o+\gamma_+^*)\pi}-1\right) \right. \\ & +\left.\left(\dfrac{t_+}{d_+^*}-t_o\right)\left(e^{(\gamma_o+\gamma_+^*)\pi}+e^{\gamma_o\pi}\right)\right](b_2+1)+\mathcal{O}((b_2+1)^2),
\end{aligned}
\end{equation}
where $\Pi_o(0)$ and $ \Pi_+^{-1}\circ \bar{\Pi}_o^{-1}(0)$ are given by \eqref{eq:9} and \eqref{eq:13}, respectively. 
\end{lemma}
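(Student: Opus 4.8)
The plan is to derive the two claimed facts — that the displacement $a_o^*-a_o^+$ equals $(\Pi_o-\Pi_+^{-1}\circ\bar\Pi_o^{-1})(0)$, and that this quantity has the expansion \eqref{eq:14} — more or less independently. The identification of the displacement with $(\Pi_o-\Pi_+^{-1}\circ\bar\Pi_o^{-1})(0)$ is geometric: by definition $a_o^*=\pi_o(0)$ is the value on $L_+^I$ reached by the orbit of $X_o$ starting at $(-1,0)$ on $L_-^I$, i.e.\ it corresponds to the point $\Pi_o(0)$ via the parametrization of $L_+^I$ by the parameter $a$. On the other hand $a_+^o=\pi_+^{-1}(b_o^*)$ is the value on $L_+^I$ whose forward $X_+$-image on $L_+^O$ is $b_o^*=\bar\pi_o^{-1}(0)$, the point of $L_+^O$ whose $\bar\Pi_o$-image is $(-1,0)$; hence $a_+^o$ corresponds to $\Pi_+^{-1}\circ\bar\Pi_o^{-1}(0)$. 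Since both parameters $a_o^*$ and $a_+^o$ parametrize points of $L_+^I$ by the same parameter $a$ (with $p=p_++a\dot p_+$), their difference is, up to the positive constant factor $|b_2+1|$ coming from $\dot p_+=(0,b_2+1)$, exactly the difference of the second coordinates, i.e.\ $(\Pi_o-\Pi_+^{-1}\circ\bar\Pi_o^{-1})(0)$; the phrase ``is equivalent to'' should be read in this sense (same sign, vanishing simultaneously), so no constant needs to be tracked.

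For the expansion \eqref{eq:14}, the work is purely algebraic: substitute the second-order expansions \eqref{eq:9} for $\Pi_o(0)$ and \eqref{eq:13} for $\Pi_+^{-1}\circ\bar\Pi_o^{-1}(0)$ and subtract, collecting powers of $(b_2+1)$. First I would check the constant term: from \eqref{eq:9} it is $-2e^{\gamma_o\tau_o}$ and from \eqref{eq:13} it is $-2e^{-(\gamma_o\bar\tau_o+\gamma_+^*\pi)}$, so the difference is $2e^{-(\gamma_o\bar\tau_o+\gamma_+^*\pi)}\bigl(1-e^{\gamma_o\tau_o}e^{\gamma_o\bar\tau_o+\gamma_+^*\pi}\bigr)$; here one uses $\tau_o+\bar\tau_o=\pi$ (which follows from $\tau_o=\arctan(\beta_o/\alpha_o)$, $\bar\tau_o=\arctan(-\beta_o/\alpha_o)$ together with the branch information $\cos\tau_o=\pm\alpha_o$, $\cos\bar\tau_o=\mp\alpha_o$, $\sin\tau_o=\sin\bar\tau_o=\beta_o$ recorded in the previous lemma) to rewrite $e^{\gamma_o\tau_o}e^{\gamma_o\bar\tau_o}=e^{\gamma_o\pi}$, giving the constant term $2e^{-(\gamma_o\bar\tau_o+\gamma_+^*\pi)}[1-e^{(\gamma_o+\gamma_+^*)\pi}]$ as claimed. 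For the linear term I would subtract the $(b_2+1)$-coefficients of \eqref{eq:9} and \eqref{eq:13}, again use $\tau_o+\bar\tau_o=\pi$ and $\alpha_o^2+\beta_o^2=1$, and re-express $\alpha_o,\alpha_+$ in terms of the traces via $\alpha_o=t_o/2$, $\alpha_+=t_+/2$ (and $\beta_o=\sin\tau_o$, $\beta_+^*=$ the value of $\beta_+$ at $b_2=-1$); the various $e^{\gamma_o\bar\tau_o}$ and $e^{\gamma_+^*\pi}$ factors then combine into the bracketed expression in \eqref{eq:14}. The $\mathcal{O}((b_2+1)^2)$ remainder is immediate since both input expansions are accurate to second order.

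The main obstacle is bookkeeping rather than conceptual: the linear coefficient in \eqref{eq:13} is already a moderately long expression, and one must carefully match exponentials (remembering $\gamma_o\tau_o$ versus $\gamma_o\bar\tau_o$ are \emph{different} because $\tau_o\ne\bar\tau_o$ in general, even though $\tau_o+\bar\tau_o=\pi$), and substitute the normal-form relation $d_o=\det A_o=1$ (hence $\alpha_o^2+\beta_o^2=1$) at the right moments to simplify. I expect the computation to be best organized by first writing $e^{\gamma_o\tau_o}=e^{\gamma_o\pi}e^{-\gamma_o\bar\tau_o}$ everywhere, so that every term carries a common factor $e^{-(\gamma_o\bar\tau_o+\gamma_+^*\pi)}$, after which the coefficient of $(b_2+1)$ reduces to the three-term combination displayed in \eqref{eq:14}. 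A symbolic algebra check (as the authors presumably used for \eqref{eq:13}) confirms the final form.
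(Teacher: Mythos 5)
Your route is the same as the paper's: the first claim is read off from the definitions of the maps and the parametrizations of $L_\pm^{I,O}$, and the expansion \eqref{eq:14} is obtained by subtracting \eqref{eq:13} from \eqref{eq:9} and invoking the identity $\tau_o+\bar{\tau}_o=\pi$ (which is precisely the one fact the paper's proof singles out). Your explicit check of the constant term, $-2e^{\gamma_o\tau_o}+2e^{-(\gamma_o\bar{\tau}_o+\gamma_+^*\pi)}=2e^{-(\gamma_o\bar{\tau}_o+\gamma_+^*\pi)}\bigl[1-e^{(\gamma_o+\gamma_+^*)\pi}\bigr]$, is correct, and the description of the bookkeeping for the linear coefficient is consistent with what the paper does.

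There is, however, a sign slip in your treatment of the ``equivalence'' that would propagate into the next proposition. The parametrization of $L_+^I$ is $p=p_++a\dot{p}_+$ with $\dot{p}_+=(0,b_2+1)$, so the $y$-coordinate of the point with parameter $a$ is $a\,(b_2+1)$, and hence
\[
(\Pi_o-\Pi_+^{-1}\circ\bar{\Pi}_o^{-1})(0)=(b_2+1)\,(a_o^*-a_+^o).
\]
In the regime under study $b_2<-1$, so the conversion factor is $(b_2+1)<0$, not $|b_2+1|>0$: the two quantities vanish simultaneously but have \emph{opposite} signs. This is exactly how the lemma is used in the proof of Proposition \ref{prop.9.2} (``when $(\Pi_o-\Pi_+^{-1}\circ\bar{\Pi}_o^{-1})(0)>0$ \dots then $a_o^*-a_o^+<0$''). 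Reading ``equivalent'' as ``same sign,'' as you do, would invert all four conclusions of that proposition and hence misidentify which of the configurations in Figures \ref{figura8ab} and \ref{figura9ab} occurs. The fix is one line: record the factor $(b_2+1)$ with its sign rather than its absolute value.
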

\begin{proof}
The first statement is a consequence of the definitions of $\Pi_o(0)$ and  \linebreak$\left(\Pi_+^{-1}\circ \bar{\Pi}_o^{-1}\right)(0)$.  Now, the equality \eqref{eq:14} follows directly from \eqref{eq:9} and \eqref{eq:13}, noting that 
\[
\tau_o+\bar{\tau}_o=\arctan \left(\dfrac{\beta_o}{\alpha_o}\right)+\arctan \left(-\dfrac{\beta_o}{\alpha_o}\right)=\pi.
\]
\end{proof}

\begin{proposition}\label{prop.9.2}
For $|b_2+1|$ small enough, we have:
\begin{itemize}
\item[(a)] if $\gamma_o+\gamma_+^*>0$, then $a_o^*-a_o^+>0$;
\item[(b)] if $\gamma_o+\gamma_+^*<0$, then $a_o^*-a_o^+<0$;
\item[({c})] if $\gamma_o+\gamma_+^*=0$, $\gamma_o>0$ and $b_2<-1$ (resp. $b_2>-1$), then $a_o^*-a_o^+<0$ (resp. $a_o^*-a_o^+>0$);
\item[(d)] if $\gamma_o+\gamma_+^*=0$, $\gamma_o<0$ and $b_2<-1$ (resp. $b_2>-1$), then $a_o^*-a_o^+>0$ (resp. $a_o^*-a_o^+<0$).
\end{itemize}
\end{proposition}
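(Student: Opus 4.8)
The plan is to read off the sign of the displacement $a_o^*-a_o^+$ directly from the second-order expansion \eqref{eq:14} furnished by the preceding lemma, which reduces everything to elementary sign analysis. Write $F(b_2)=(\Pi_o-\Pi_+^{-1}\circ\bar\Pi_o^{-1})(0)$, so that $a_o^*-a_o^+$ is equivalent to $F(b_2)$ and
\[
F(b_2)=C_0+C_1(b_2+1)+\mathcal{O}((b_2+1)^2),
\]
where $C_0=2e^{-(\gamma_o\bar\tau_o+\gamma_+^*\pi)}\big(1-e^{(\gamma_o+\gamma_+^*)\pi}\big)$ and $C_1$ is the bracketed coefficient of $(b_2+1)$ in \eqref{eq:14}.

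First I would settle (a) and (b). Since $e^{-(\gamma_o\bar\tau_o+\gamma_+^*\pi)}>0$, the sign of $C_0$ equals that of $1-e^{(\gamma_o+\gamma_+^*)\pi}$, namely $-\sgn(\gamma_o+\gamma_+^*)$; in particular $C_0\neq 0$ whenever $\gamma_o+\gamma_+^*\neq 0$, so for $|b_2+1|$ small $F(b_2)$ has the same nonzero sign as $C_0$. Passing this back through the equivalence of the preceding lemma gives $a_o^*-a_o^+>0$ when $\gamma_o+\gamma_+^*>0$ and $a_o^*-a_o^+<0$ when $\gamma_o+\gamma_+^*<0$, which are (a) and (b).

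For (c) and (d) the constant term drops out: when $\gamma_o+\gamma_+^*=0$ one has $e^{(\gamma_o+\gamma_+^*)\pi}=1$, hence $C_0=0$ and $F(b_2)=C_1(b_2+1)+\mathcal{O}((b_2+1)^2)$; thus for $|b_2+1|$ small the sign of $F(b_2)$ is $\sgn(C_1)\,\sgn(b_2+1)$, provided $C_1\neq 0$. The heart of the matter is to evaluate $C_1$ under the constraint $\gamma_+^*=-\gamma_o$ and check that it factors as $-\gamma_o$ times a strictly positive quantity. Using $\gamma_o+\gamma_+^*=0$ collapses $C_1$ to $e^{\gamma_o\tau_o}\tfrac{\pi t_+}{2(\beta_+^*)^3}+\big(\tfrac{t_+}{d_+^*}-t_o\big)\big(1+e^{\gamma_o\pi}\big)$. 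I would then use $\det A_o=1$, which forces $\alpha_o^2+\beta_o^2=1$, hence $\beta_o=1/\sqrt{1+\gamma_o^2}$ and $t_o=2\alpha_o=2\gamma_o\beta_o$, together with the relations imposed by $\gamma_+^*=\alpha_+/\beta_+^*=-\gamma_o$, namely $t_+=2\alpha_+=-2\gamma_o\beta_+^*$ and $d_+^*=\alpha_+^2+(\beta_+^*)^2=(1+\gamma_o^2)(\beta_+^*)^2$. These give $\tfrac{\pi t_+}{2(\beta_+^*)^3}=-\tfrac{\pi\gamma_o}{(\beta_+^*)^2}$ and $\tfrac{t_+}{d_+^*}-t_o=-2\gamma_o\big(\tfrac{1}{\beta_+^*(1+\gamma_o^2)}+\tfrac{1}{\sqrt{1+\gamma_o^2}}\big)$, so that
\[
C_1=-\gamma_o\Big[\frac{\pi e^{\gamma_o\tau_o}}{(\beta_+^*)^2}+2\Big(\frac{1}{\beta_+^*(1+\gamma_o^2)}+\frac{1}{\sqrt{1+\gamma_o^2}}\Big)\big(1+e^{\gamma_o\pi}\big)\Big].
\]
Since $\beta_+^*>0$ ($X_+$ is a focus) and $\gamma_o\neq 0$ (by (H1), $t_o\neq 0$), the bracket is a sum of strictly positive terms, so $\sgn(C_1)=-\sgn(\gamma_o)\neq 0$. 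Therefore $\sgn(F(b_2))=-\sgn(\gamma_o)\,\sgn(b_2+1)$ for $|b_2+1|$ small, and feeding this back through the preceding lemma — which is precisely where the dependence on $\sgn(b_2+1)$, and hence the ``resp.'' clauses, enters — yields (c) and (d).

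The substitution that collapses $C_1$ is routine; the genuine difficulty is the sign bookkeeping when translating between $F(b_2)$ and $a_o^*-a_o^+$ through the preceding lemma in the degenerate cases (c) and (d), where the leading behaviour is linear in $b_2+1$ and therefore truly sensitive to the side of $-1$ on which $b_2$ lies.
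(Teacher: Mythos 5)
Your argument is correct and is essentially the paper's own proof: both read the signs directly off the expansion \eqref{eq:14}, using the constant term $2e^{-(\gamma_o\bar{\tau}_o+\gamma_+^*\pi)}\bigl(1-e^{(\gamma_o+\gamma_+^*)\pi}\bigr)$ for (a)--(b) and, after imposing $\gamma_+^*=-\gamma_o$ together with $t_o=2\gamma_o\beta_o$, $t_+=2\gamma_+^*\beta_+^*$, $d_+^*>0$, the coefficient of $(b_2+1)$ for (c)--(d); your explicit factoring of $-\gamma_o$ out of that coefficient merely spells out what the paper leaves implicit. The one point to state precisely is the orientation of the ``equivalence'': the paper uses $(\Pi_o-\Pi_+^{-1}\circ\bar{\Pi}_o^{-1})(0)>0$ if and only if $a_o^*-a_o^+<0$ (coming from the parametrization of $L_+^I$ by $\dot{p}_+=(0,b_2+1)$ with $b_2+1<0$), and while all your stated signs are consistent with this, your closing remark misattributes the $\mathrm{sign}(b_2+1)$-dependence in (c)--(d) to that translation, whereas it actually enters through the explicit factor $(b_2+1)$ multiplying the first-order coefficient in \eqref{eq:14}.
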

\begin{proof}
Note that, when $(\Pi_o - \Pi_+^{-1}\circ \bar{\Pi}_o^{-1})(0)>0$ (resp. $(\Pi_o - \Pi_+^{-1}\circ \bar{\Pi}_o^{-1})(0)<0$), then $a_o^*-a_o^+<0$ (resp. $a_o^*-a_o^+>0$). Hence, by \eqref{eq:14}, if $\gamma_o+\gamma_+^*\neq 0$ the sign of $(\Pi_o - \Pi_+^{-1}\circ \bar{\Pi}_o^{-1})(0)$ is determined by sign of
\[
1-e^{(\gamma_o+\gamma_+^*)\pi},
\]
and the statement (a) and (b) follow. 

Now, if $\gamma_o+\gamma_+^*= 0$,  the sign of $(\Pi_o - \Pi_+^{-1}\circ \bar{\Pi}_o^{-1})(0)$ is determined by the sign of
\[
\left[
e^{\gamma_o\tau_o}\left(\dfrac{\pi t_+}{2(\beta_+^*)^3}\right)+\left(\dfrac{t_+}{d_+^*}-t_o\right)\left(1+e^{\gamma_o\pi}\right)\right](b_2+1).
\]
Thus, as $d_+^*>0$, $\beta_o>0$, $\beta_+^*>0$, $t_o=2\gamma_o \beta_o$ and $t_+=2\gamma_+^* \beta_+^*$ with $t_o t_+<0$, the statement ({c}) and (d) follow.
\end{proof}

The proof of Theorem \ref{the_paper2:01} follows directly from the proof of the next proposition.

\begin{proposition} 
\label{teo two limit cycle}
	Consider system {\rm (\ref{base system nf})}. Assume
	that  $X_-$ has a  center and $X_+$ and $X_o$ have  foci with different stability. Then  for $b_2<-1$, with $|b_2+1|$ small enough, if either $\gamma_o>0$ (equivalently $\gamma_+^*<0$) and $\gamma_o+\gamma_+^*>0$ or $\gamma_o<0$ (equivalently $\gamma_+^*>0$) and $\gamma_o+\gamma_+^*<0$,  system {\rm (\ref{base system nf})} has at least two limit cycles. Moreover, one limit cycle visit only the two zones $R_o$ and $R_+$ and the other visit the three zones. 
\end{proposition}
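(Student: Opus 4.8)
The plan is to realize the two limit cycles as fixed points of two Poincar\'e return maps, a three-zone map $P_3$ and a two-zone map $P_2$, and to exhibit a fixed point of each by a single sign change (intermediate value theorem), using only the qualitative description of the component maps in Propositions \ref{prop mau 1}, \ref{prop.8}, \ref{prop.9}, \ref{prop.10} and \ref{prop.11}, together with the displacement sign of Proposition \ref{prop.9.2}.

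First I would fix the setting. Under (H1), (H2) and $b_2<-1$, Lemma \ref{equilibrium}(i) makes $X_-$ a virtual center, $X_o$ a virtual focus and $X_+$ a real focus, so $\pi_-$ is the identity by Proposition \ref{prop mau 1}(b); since $X_+$ and $X_o$ have opposite stability, $\gamma_o$ and $\gamma_+$ (hence also $\gamma_+^*$) have opposite signs, $\gamma_o$ does not depend on $b_2$, and $\gamma_+\to\gamma_+^*$ as $b_2\to(-1)^-$. Put $a_o^+=\pi_+^{-1}(b_o^*)$. A periodic orbit through the three zones corresponds to a fixed point of $P_3:=\pi_o\circ\pi_-\circ\bar\pi_o\circ\pi_+=\pi_o\circ\bar\pi_o\circ\pi_+$ (using $\pi_-=\mathrm{id}$), which is continuous on $[a_o^+,\infty)$ (real-analytic on its interior) by Propositions \ref{prop.8}, \ref{prop.9}, \ref{prop.10} and \ref{prop.11}, all compositions staying inside the stated domains because $\pi_+(a)\ge\pi_+(a_o^+)=b_o^*$ there. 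A periodic orbit visiting only $R_o$ and $R_+$ corresponds to a fixed point of $P_2:=\hat\pi_o\circ\pi_+$, where $\hat\pi_o$ is the Poincar\'e map of $X_o$ carrying $L_+^O$ to $L_+^I$ along arcs that remain in $R_o$; one has $\hat\pi_o\colon[0,b_o^*]\to[0,a_o^*]$ continuous and increasing with $\hat\pi_o(0)=0$ and $\hat\pi_o(b_o^*)=a_o^*$, obtained exactly as in the proof of Proposition \ref{prop.11} from the parametrisation by the function $\varphi$ of Lemma \ref{basicfunction}. Thus $P_2$ is continuous on $[a_+^*,a_o^+]$ when $t_+<0$ and on $[0,a_o^+]$ when $t_+>0$, these being the intervals on which $\pi_+(a)\in[0,b_o^*]$ (here I use $a_+^*<b_o^*$, valid for $|b_2+1|$ small, i.e.\ the broken period annulus configuration of Figures \ref{figura8ab} and \ref{figura9ab}).

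Next I would compare the two maps at the common point $a_o^+$. Since $\pi_+(a_o^+)=b_o^*$, $\bar\pi_o(b_o^*)=0$, $\pi_o(0)=a_o^*$ and $\hat\pi_o(b_o^*)=a_o^*$, one gets $P_3(a_o^+)=P_2(a_o^+)=a_o^*$, so the displacement of either map at $a_o^+$ equals $a_o^*-a_o^+$, which by Proposition \ref{prop.9.2}(a),(b) has sign $\sgn(\gamma_o)$ under the hypotheses (case (a) if $\gamma_o>0$, case (b) if $\gamma_o<0$); in particular it is nonzero. It remains to sign the displacements at the other ends. For $P_3$: Propositions \ref{prop.8}(a.3) and \ref{prop.9}(a.3) give $\pi_+'(a)\to e^{\gamma_+\pi}$ as $a\to\infty$, while Propositions \ref{prop.10}(b) and \ref{prop.11}(b) give $(-\frac{1-b_2}{1+b_2})\frac{b_2+1}{b_2-1}=1$ for the product of the limiting slopes of $\pi_o$ and $\bar\pi_o$; hence $P_3'(a)\to e^{\gamma_+\pi}$, so $P_3(a)/a\to e^{\gamma_+\pi}$ (L'H\^{o}pital) and $P_3(a)-a\to+\infty$ if $\gamma_+>0$, $\to-\infty$ if $\gamma_+<0$, i.e.\ with sign $-\sgn(\gamma_o)$, opposite to $\sgn(a_o^*-a_o^+)=\sgn(\gamma_o)$. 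The intermediate value theorem then gives $a^{(3)}\in(a_o^+,\infty)$ with $P_3(a^{(3)})=a^{(3)}$, a limit cycle through the three zones. For $P_2$: if $\gamma_o>0$ (so $t_+<0$) then $P_2(a_+^*)=\hat\pi_o(\pi_+(a_+^*))=\hat\pi_o(0)=0<a_+^*$, so its displacement is negative at $a_+^*$ and equals $a_o^*-a_o^+>0$ at $a_o^+$; if $\gamma_o<0$ (so $t_+>0$) then $P_2(0)=\hat\pi_o(\pi_+(0))>0$ since $0<\pi_+(0)<b_o^*$ and $\hat\pi_o$ is increasing with $\hat\pi_o(0)=0$, so its displacement is positive at $0$ and equals $a_o^*-a_o^+<0$ at $a_o^+$. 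Either way the displacement of $P_2$ changes sign and one gets $a^{(2)}<a_o^+$ with $P_2(a^{(2)})=a^{(2)}$, a limit cycle in $R_o\cup R_+$. The two periodic orbits are distinct (the first meets $L_-$, the second does not; and $a^{(2)}<a_o^+<a^{(3)}$), and by real-analyticity of $P_2$ and $P_3$ on the relevant open intervals --- on which the displacement functions are not identically zero --- each fixed point is isolated, hence a genuine limit cycle, and Theorem \ref{the_paper2:01} follows.

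The routine part is the sign bookkeeping once $\hat\pi_o$ is available. I expect the main obstacle to be the construction and the endpoint analysis of the two-zone map $\hat\pi_o$ --- establishing its domain $[0,b_o^*]$ and the matching values $\hat\pi_o(0)=0$, $\hat\pi_o(b_o^*)=a_o^*$ --- together with checking that every composition in $P_2$ and $P_3$ lies in the domains of Propositions \ref{prop.8}--\ref{prop.11}, equivalently that for $|b_2+1|$ small the period annulus of Figure \ref{figura1a} breaks into one of the four configurations of Figures \ref{figura8ab}--\ref{figura9ab}. The remaining nontrivial input, Proposition \ref{prop.9.2}, I treat as already established via the expansions \eqref{eq:9}, \eqref{eq:8}, \eqref{eq:13} and \eqref{eq:14}.
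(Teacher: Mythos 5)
Your proposal is correct, and its skeleton is the paper's: use Proposition \ref{prop.9.2} to fix the broken-annulus configuration (Figure \ref{figura8ab}(a) or \ref{figura9ab}(b)), produce one cycle in $R_o\cup R_+$ from the region between the focus of $X_+$ and the critical orbit through $a_o^+$, and produce the three-zone cycle from the full return map, whose displacement has sign $\sgn(a_o^*-a_o^+)$ at the inner boundary and the opposite sign near infinity because $\lim\Pi'=e^{\gamma_+\pi}$ (your $P_3$ is just the paper's $\Pi=\bar\pi_o\circ\pi_+\circ\pi_o$ conjugated to base point $L_+^I$; the slope bookkeeping $\bigl(-\tfrac{1-b_2}{1+b_2}\bigr)\tfrac{b_2+1}{b_2-1}=1$ matches \eqref{eq:15}). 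Where you genuinely diverge is the two-zone cycle: the paper argues dynamically that the orbit through $a_o^+$ spirals toward the attracting focus of $X_+$ in backward time, so a (repelling) periodic orbit must sit between the focus and that orbit --- a Poincar\'e--Bendixson-type trapping argument stated in two lines --- whereas you build an explicit interior return map $\hat\pi_o\colon[0,b_o^*]\to[0,a_o^*]$ for $X_o$ from $L_+^O$ to $L_+^I$, match its endpoint values to $0$ and $a_o^*$, and apply the intermediate value theorem to $P_2=\hat\pi_o\circ\pi_+$ on $[a_+^*,a_o^+]$ (resp.\ $[0,a_o^+]$). This costs you the construction of $\hat\pi_o$ (which the paper never defines, but which is routine given Lemma \ref{basicfunction} and the treatment of $\pi_o$, $\bar\pi_o$), and buys a uniform IVT argument for both cycles plus an explicit isolation step via analyticity of the displacement functions --- a point the paper leaves implicit when it calls the periodic orbits limit cycles. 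Both routes rest on the same essential inputs, so I would classify yours as a more explicit rendering of the same proof rather than a new one.
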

\begin{proof}
Firstly we consider the case $\gamma_o>0$, $\gamma_+^*<0$ and $\gamma_o+\gamma_+^*>0$. So, by Proposition \ref{prop.9.2},  $a_o^*-a_o^+>0$, i.e. we have the case showed in Figure \ref{figura8ab} (a). Note that the orbit $\Gamma(t)$, of system {\rm (\ref{base system nf})} by the point associated to $a_o^+$, spirals toward  the focus of $X_+$ when $t\rightarrow -\infty$. On the other hand, the focus of $X_+$ is an attractor. Then there is at least a limit cicle in two zones that pass by a point of $L_+^I$ between $a_+^*$ and $a_o^+$. Moreover this limit cycle is repeller. 
	
	In the three zones of Figure \ref{figura8ab} (a), we consider the Poincar\'e map $\Pi:[0, \infty)\rightarrow [0, \infty)$ given by  $\Pi=\bar\pi_o\circ\pi_+\circ\pi_o\circ\pi_-$. Now, by Proposition \ref{prop mau 1}~(b), $\pi_-$ is the identity map, so we
	can write $\Pi=\bar{\pi}_o\circ\pi_+\circ \pi_o$. Hence we have
	$$
	\Pi'(c)=
	\bar{\pi}_o'(\pi_+(\pi_o({c})))\cdot\pi_+'(\pi_o({c}))\cdot\pi_o'({c}).
	$$
From Propositions \ref{prop.8}, \ref{prop.9}, \ref{prop.10} and \ref{prop.11}, we obtain
\begin{equation}
\label{eq:15}
\displaystyle	\lim_{c\rightarrow +\infty}\Pi'(c)=\dfrac{b_2+1}{b_2-1}e^{\gamma_+\pi}\dfrac{b_2-1}{b_2+1}=e^{\gamma_+\pi}.
\end{equation}
As $\gamma_+^*<0$, so $\gamma_+<0$ for $|b_2+1|$ small enough and $\Pi$ is decreasing in a neighborhood of infinity, i.e. the infinity is a repeller to  system {\rm (\ref{base system nf})}. On the other hand, the orbit $\Gamma(t)$ spirals moving away from the focus. Therefore we have at least a limit cycle in the three zones.

For the second case $\gamma_o<0$, $\gamma_+^*>0$ and $\gamma_o+\gamma_+^*<0$, by Proposition \ref{prop.9.2}, we have  $a_o^*-a_o^+<0$, i.e. we have the case showed in Figure \ref{figura9ab} (b).  Hence, as the focus of $X_+$ is repeller, similar the previous case it follows that there is at least a limit cicle in two zones that pass by a point of $L_+^O$ between $a_+^*$ and $b_o^*$, which is attractor. Now, for Figure \ref{figura9ab} (b), we consider the same Poincar\'e map $\Pi$ as in the case of Figure \ref{figura8ab} (a).  However in this case the domain of $\Pi$ is $[c^{*}, \infty)$, where $\Pi(c^{*})=0$. Thus, as $\gamma_+^*>0$, it follows by \eqref{eq:15} that the infinity is an attractor  to  system {\rm (\ref{base system nf})} and as in the previous case we have at least a limit cycle in the three zones.

Then in both cases we conclude the existence  of at least two limit cycles, one visiting only two zones and the other visiting the three zones.
\end{proof}	

\begin{remark}
From above result, we conclude the existence of at least two limit cycles to system {\rm (\ref{base system nf})} assuming that  $X_-$ has a  center, $X_+$ and $X_o$ have  foci with different stability and $b_2<-1$, with $|b_2+1|$ small enough. The same conclusion we have for the equivalent case, when $X_+$ has a  center, $X_-$ and $X_o$ have  foci with different stability and $b_2>1$, with $|b_2-1|$ small enough. However we have not been able to determine the exact number of limit cycles for this class of vector fields. In fact, for the cases determined in  Figures \ref{figura8ab} (b) and \ref{figura9ab} (a) we also cannot say nothing about the existence or not of limit cycles for now.
\end{remark}

\section{Examples}
\label{sec examples}
In this section,  we will illustrate, through two examples, the appearance
of two limit cycles when the periodic annulus of Figures \ref{figura1a} is broken, obtaining the cases described in Figures \ref{figura8ab}(a) and \ref{figura9ab}(b). We assume that $X_+$ has a focus and, by the hypothesis (H1) and (H2),  $X_-$ has a center and $X_o$ has a focus  with different stability with
respect to the focus of $X_o$.  In this case, by the previous section, the phase portrait of  vector field \eqref{base system nf} is equivalent to Figures \ref{figura1a} if and only if $b_2=-1$ and $\gamma_++\gamma_o=0$.

The idea is to obtain two one-parameter families of vector fields, whose parameter is $b_2$, such that for $b_2=-1$ we have the periodic annulus of Figures \ref{figura1a}. On another words, we want to put the condition $\gamma_++\gamma_o=0$ as a function of $b_2$.  

By Lemma \ref{nf}, we have
\[
\gamma_o=\dfrac{a_1}{\sqrt{4-a_1^2}}\;\; \mbox{and}\;\;
 \gamma_+=\dfrac{c_{11}+a_1}{\sqrt{4(1+b_2-f_2+a_1c_{11})-(c_{11}+a_1)^2}}.
 \]
Now, for $b_2=-1$, $\gamma_o+\gamma_+=0$ if and only if
 $f_2=a_1c_{11}-\left(\dfrac{c_{11}+a_1}{a_1}\right)^2$. Hence,  doing
 $f_2=a_1c_{11}-\left(\dfrac{c_{11}+a_1}{a_1}\right)^2-\varepsilon$, a simple calculation gives us
\begin{equation}
\label{eq:16}
\gamma_+=\dfrac{c_{11}+a_1}{|c_{11}+a_1|}
\dfrac{|a_1|}{\sqrt{4-a_1^2+\dfrac{4a_1^2}{(c_{11}+a_1)^2}(b_2+1+\varepsilon)}}.
\end{equation}
Then, as $\gamma_o\gamma_+<0$ or equivalently $a_1(c_{11}+a_1)<0$, we get
\begin{equation}\label{eq:17}
\gamma_+=-
\dfrac{a_1}{\sqrt{4-a_1^2+\dfrac{4a_1^2}{(c_{11}+a_1)^2}(b_2+1+\varepsilon)}}.
\end{equation} 

{For}  $b_2+1+\varepsilon>0$, from \eqref{eq:17}, $\gamma_o+\gamma_+>0$ if $\gamma_o>0$ (i.e. $a_1>0$)  and $\gamma_o+\gamma_+<0$ if $\gamma_o<0$ (i.e. $a_1<0$).  Hence, from Proposition \ref{prop.9.2}  (for $b_2<-1$ with $|b_2+1|$ small enough), we have in the first case the configuration of Figure \ref{figura8ab}(a) and in the second one the configuration of Figure \ref{figura9ab} (b).  Moreover, by Proposition \ref{teo two limit cycle}, vector field \eqref{base system nf} has at least two limit cycles. 

We obtain the follows examples.

\begin{example}
\label{Ex:01}
Consider system \eqref{base system nf} with $\gamma_o>0$, $\gamma_+<0$ (i.e. $a_1>0$ and $c_{11}+a_1<0$), $4-a_1^2>0$, $a_{11}=-a_1$, $1-a_1^2-b_2+d_2>0$ and $f_2=a_1c_{11}-\left(\dfrac{c_{11}+a_1}{a_1}\right)^2-\varepsilon$, then (for $(b_2+1)^2+\varepsilon^2$ small enough) we have that
\begin{itemize}
\item[(a)] if $b_2>-1$ and $b_2+1+\varepsilon<0$, by Proposition 4.2 of \cite {Mauricio} and Proposition \ref{prop.9.2}, the phase portrait of vector field \eqref{base system nf} is equivalent to  Figure \ref{bifurca1}(a);
\item[(b)] if $b_2=-1$ and $\varepsilon=0$, by Proposition 12 from \cite{ClauWeMau}, the phase portrait of vector field \eqref{base system nf} is equivalent to  Figure \ref{bifurca1}(b);
\item[(c)] if $b_2<-1$ and $b_2+1+\varepsilon>0$, by Proposition \ref{teo two limit cycle}, the phase portrait of vector field \eqref{base system nf} in a neighborhood of these limit cycles is equivalent to  Figure \ref{bifurca1}({c}).
\end{itemize}
\begin{figure}[h!]
\begin{center}
\begin{overpic}[width=14cm,height=3cm]{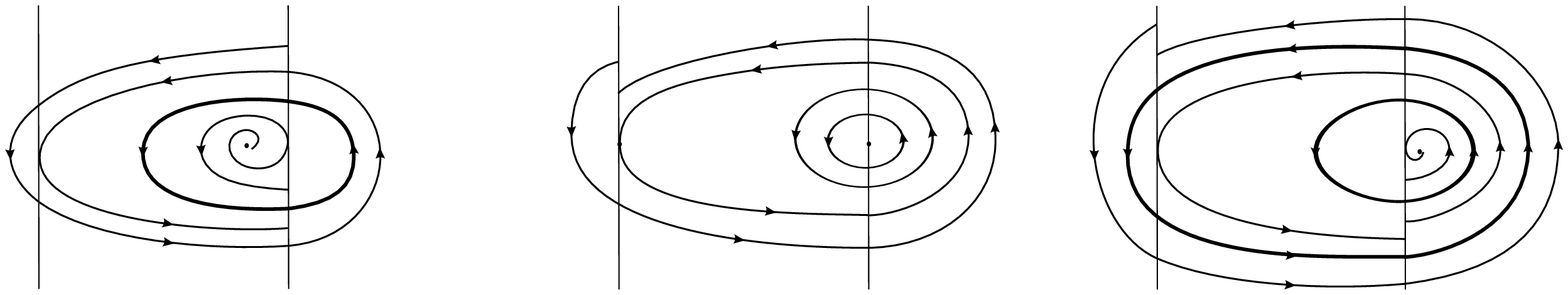}
\put(1,23){\tiny $L_-$} 
\put(17,23){\tiny $L_+$}
\put(38,23){\tiny $L_-$} 
\put(54,23){\tiny $L_+$}
\put(72.5,23){\tiny $L_-$} 
\put(89,23){\tiny $L_+$}
\put(-3,-6){\footnotesize (a) $b_2>-1$  and $b_2+1+\varepsilon<0$}
\put(39,-6){\footnotesize(b) $b_2=-1$ and $\varepsilon=0$}
\put(68,-6){\footnotesize({c}) $b_2<-1$ and $b_2+1+\varepsilon>0$}
\end{overpic}
\end{center}
\vspace{1.5cm}
\caption{Phase portrait of system \eqref{base system nf} with $a_1>0$ and $c_{11}+a_1<0$, $4-a_1^2>0$, $a_{11}=-a_1$, $1-a_1^2-b_2+d_2>0$ and $f_2=a_1c_{11}-\left(\dfrac{c_{11}+a_1}{a_1}\right)^2-\varepsilon$, for $(b_2+1)^2+\varepsilon^2$ small enough.} \label{bifurca1}
\end{figure}

We can use the software P5 to do numerical simulations of the phase portraits of system \eqref{base system nf} for specific parameter values. For instance, 
\begin{itemize}
\item the Figure \ref{Ex5.1Fig7}(a) correspond to Figure \ref{bifurca1}(a) with $a_1=1$,  $b_2=-0.9$, $c_{11}=-1.4$, 
$a_{11}=-1$, $d_2=4$ and $\varepsilon=-0.21$;
\item the Figure \ref{Ex5.1Fig7}(b) correspond to Figure \ref{bifurca1}(b) with $a_1=1$,  $b_2=-1$, $c_{11}=-1.4$, 
$a_{11}=-1$, $d_2=4$ and $\varepsilon=0$;
\item the Figure \ref{Ex5.1Fig7}({c}) correspond to Figure \ref{bifurca1}({c}) with $a_1=1$,  $b_2=-1.09$, $c_{11}=-1.4$, 
$a_{11}=-1$, $d_2=4$ and $\varepsilon=0.21$. 
\end{itemize}

\begin{figure}[h!]
\begin{center}
\begin{overpic}[width=15cm,height=5cm]{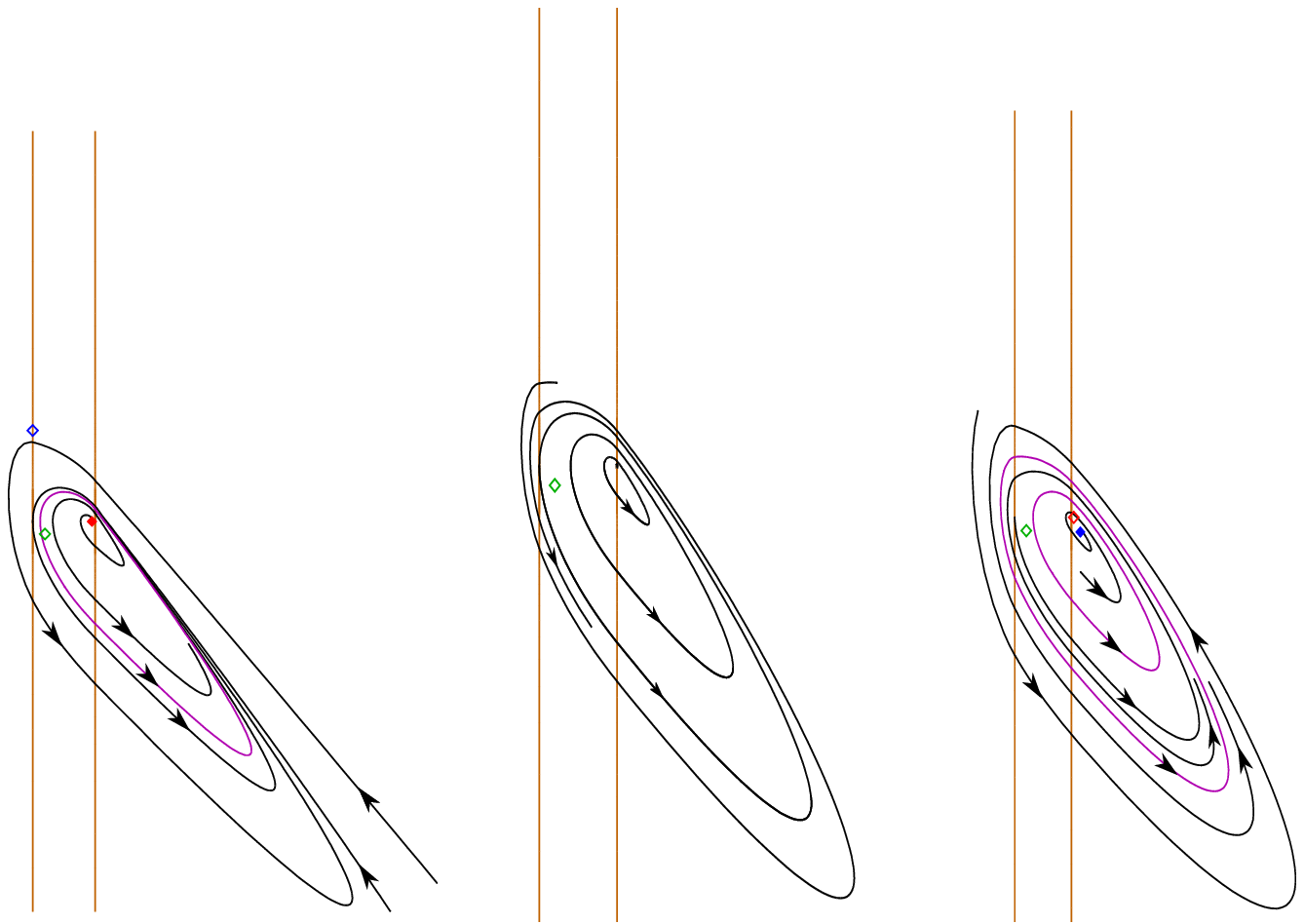}
\put(20.5,-6){(a)}
\put(50.7,-6){(b)}
\put(78,-6){({c})}
\end{overpic}
\end{center}
\vspace{1.5cm}
\caption{Numerical simulations of phase portraits of system \eqref{base system nf} using software P5 on the conditions of Example \ref{Ex:01}.} \label{Ex5.1Fig7}
\end{figure}
\end{example}

\begin{example}
\label{Ex:02}
Consider system \eqref{base system nf} with $\gamma_o<0$, $\gamma_+>0$ (i.e. $a_1<0$ and $c_{11}+a_1>0$), $4-a_1^2>0$, $a_{11}=-a_1$, $1-a_1^2-b_2+d_2>0$ and $f_2=a_1c_{11}-\left(\dfrac{c_{11}+a_1}{a_1}\right)^2-\varepsilon$, then (for $(b_2+1)^2+\varepsilon^2$) we have that
\begin{itemize}
\item[(a)] if $b_2>-1$  and $b_2+1+\varepsilon<0$, by Proposition 4.2 of \cite {Mauricio} and Proposition \ref{prop.9.2}, the phase portrait of vector field \eqref{base system nf} is equivalent to  Figure \ref{bifurca2}(a);
\item[(b)] if $b_2=-1$  and $\varepsilon=0$, by Proposition 12 from \cite{ClauWeMau}, the phase portrait of vector field \eqref{base system nf} is equivalent to  Figure \ref{bifurca2}(b);
\item[(c)] if $b_2<-1$  and $b_2+1+\varepsilon>0$, by Proposition \ref{teo two limit cycle}, the phase portrait of vector field \eqref{base system nf} in a neighborhood of these limit cycles is equivalent to  Figure \ref{bifurca2}({c}).
\end{itemize}

\begin{figure}[h!]
\begin{center}
\begin{overpic}[width=14cm,height=3cm]{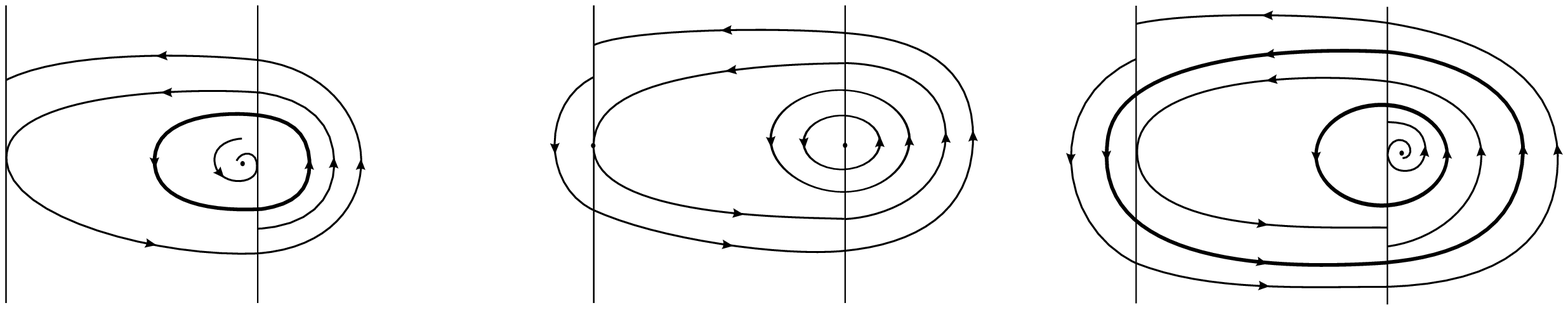}
\put(-1,23){\tiny $L_-$} 
\put(15,23){\tiny $L_+$}
\put(36.5,23){\tiny $L_-$} 
\put(52.5,23){\tiny $L_+$}
\put(71.5,23){\tiny $L_-$} 
\put(88,23){\tiny $L_+$}
\put(-5,-6){\footnotesize (a) $b_2>-1$  and $b_2+1+\varepsilon<0$}
\put(36,-6){\footnotesize(b) $b_2=-1$ and $\varepsilon=0$}
\put(68,-6){\footnotesize({c}) $b_2<-1$ and $b_2+1+\varepsilon>0$}
\end{overpic}
\end{center}
\vspace{1.5cm}
\caption{Phase portrait of system \eqref{base system nf} with $a_1<0$ and $c_{11}+a_1>0$, $4-a_1^2>0$, $a_{11}=-a_1$, $1-a_1^2-b_2+d_2>0$ and $f_2=a_1c_{11}-\left(\dfrac{c_{11}+a_1}{a_1}\right)^2+2(b_2+1)$, for $(b_2+1)^2+\varepsilon^2$ small enough.} \label{bifurca2}
\end{figure}

As in Example \ref{Ex:01}, we can use the software P5 (see \cite{P5}) to do numerical simulations of the phase portraits of system \eqref{base system nf} for specific parameter values. For instance, 
\begin{itemize}
\item the Figure \ref{Ex5.2Fig8}(a) correspond to Figure \ref{bifurca2}(a) with $a_1=-0.8$,  $b_2=-0.9$, $c_{11}=1.4$, 
$a_{11}=0.8$, $d_2=4$ and $\varepsilon=-0.35$;
\item the Figure \ref{Ex5.2Fig8}(b) correspond to Figure \ref{bifurca2}(b) with $a_1=-0.8$,  $b_2=-1$, $c_{11}=1.4$, 
$a_{11}=0.8$, $d_2=4$ and $\varepsilon=0$;
\item the Figure \ref{Ex5.2Fig8}({c}) correspond to Figure \ref{bifurca2}({c}) with $a_1=-0.8$,  $b_2=-1.09$, $c_{11}=1.4$, 
$a_{11}=0.8$, $d_2=4$ and $\varepsilon=0.43$. 
\end{itemize}

\begin{figure}[h!]
\begin{center}
\begin{overpic}[width=15cm,height=5cm]{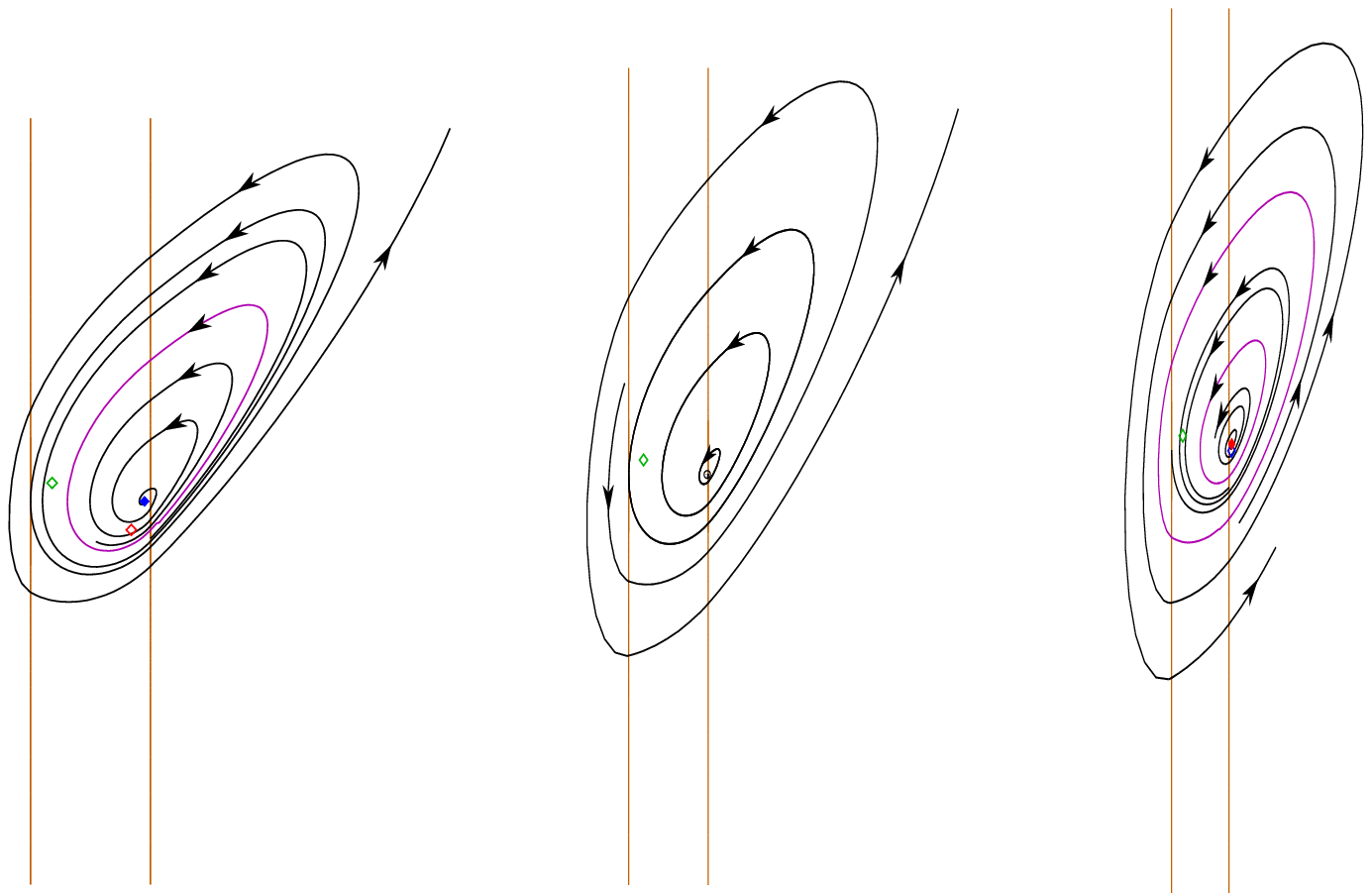}
\put(20.5,-6){(a)}
\put(50.7,-6){(b)}
\put(78,-6){({c})}
\end{overpic}
\end{center}
\vspace{1.5cm}
\caption{Numerical simulations of phase portraits of system \eqref{base system nf} using software P5 on the conditions of Example \ref{Ex:02}.} \label{Ex5.2Fig8}
\end{figure}
\end{example}

\section*{Acknowledgments}

The first author is partially supported by Fapesp grant number
2013/15941-5 and grant FP7-PEOPLE-2012-IRSES 318999. The second and third authors are partially supported by a FAPESP grant 2013/34541--0. The second author is supported by a CAPES PROCAD grant 88881.068462/2014-01.

\end{document}